\newcommand{\R}{{\mathbb R}}
\newcommand{\N}{{\mathbb N}}
\newtheorem*{gypo}{Conjecture}
\newtheorem{opr}{Definition}
\newtheorem{cor}{Corollary}
\newtheorem{thm}{Theorem}
\newtheorem{lem}{Lemma}
\DeclareMathOperator{\conv}{conv}
\title{Diameter graphs in $\R^4$ }
\begin{document}
\author{Andrey Kupavskii\footnote{Moscow Institute of Physics and Technology} }

\maketitle

\begin{abstract} A \textit{diameter graph in $\R^d$ } is a graph, whose set of vertices is a finite subset of $\R^d$ and whose set of edges is formed by pairs of vertices that are at diameter apart. This paper is devoted to the study of different extremal properties of diameter graphs in $\R^4$ and on a three-dimensional sphere. We prove an analogue of V\'azsonyi's and Borsuk's conjecture for diameter graphs on a three-dimensional sphere with radius greater than $1/\sqrt 2$. We prove Schur's conjecture for diameter graphs in $\R^4.$ We also establish the maximum number of triangles a diameter graph in $\R^4$ can have, showing that the extremum is attained only on specific Lenz configurations. \end{abstract}

\section{Introduction}

The following question was raised by Borsuk in 1933 \cite{Bor}: is it true that any set of diameter~1 in $\R^d$ can be partitioned into $d+1$ parts of strictly smaller diameter? The positive answer to this question is called Borsuk's conjecture. Borsuk gave a positive answer to this question for $d=2,$ and later the same was proved for $d=3$ (see \cite{Rai1, Rai3}). Borsuk's conjecture was disproved by Kahn and Kalai in 1993 \cite{KK}. In that paper they constructed a \textit{finite} set of points in dimension 2016 such that it cannot be partitioned into 2017 parts of smaller diameter.  The bounds on the minimum dimension of the counterexample were obtained by several authors. Very recently, Bondarenko \cite{Bond} disproved Borsuk's conjecture in dimensions $d\ge 65$.

An analogue of Borsuk's conjecture for finite sets is well-studied. A natural notion to work with in the finite case is that of \textit{diameter graph}.
A \textit{diameter graph in $\R^d$  } is a graph, whose set of vertices is a finite subset of $\R^d$ and whose set of edges is formed by the pairs of vertices that are at diameter apart. Next we work only with  sets of diameter 1. For a finite set $X$ of unit diameter denote by $G(X)$ the diameter graph with the vertex set $X$. In terms of diameter graphs, Borsuk's problem for finite sets can be formulated as follows: is it true that for any $X\subset \R^d$ we have $\chi(G(X))\le d+1$? Here $\chi(G)$ is the chromatic number of the graph.

In \cite{PH} Hopf and Pannwitz proved that the number of edges in any diameter graph in $\R^2$ is at most $n$, which easily implies Borsuk's conjecture for finite sets on the plane. V\'azsonyi conjectured, that any diameter graph in $\R^3$ on $n$ vertices can have at most $2n-2$ edges. It is easy to see that Borsuk's conjecture for finite sets in $\R^3$ follows from this statement. This conjecture was proved independently by Gr\"unbaum \cite{GR}, Heppes \cite{Hep2} and Straszewicz \cite{St}.

In this paper we prove Borsuk's and V\'azsonyi's conjecture for finite sets on a three-dimensional sphere $S^3_r$ of radius $r>1/\sqrt 2$ (note that we consider sets of Euclidean diameter~1). It is easy to see that V\'azsonyi's conjecture fails for $S^3_{1/\sqrt 2}$ and that Borsuk's conjecture fails for $S^3_{\sqrt{2/5}}$. Diameter graphs on $S^3_r$ are discussed in Section \ref{sec2}.

As we already discussed, the study of the maximum number of edges in a diameter graph is related to Borsuk's conjecture. Surely, it has an independent interest. Extremal properties of diameter graphs and unit distance graphs were extensively studied. A \textit{Unit distance graph} in $\R^d$ is a graph, whose set of vertices is a finite subset of $\R^d$ and whose set of edges is formed by pairs of vertices that are at unit distance apart (here we do not demand that the set of vertices is of diameter 1).

Denote by $D_d(l,n)$ ($U_d(l,n)$) the maximum number of cliques of size $l$ in a diameter (unit distance) graph on $n$ vertices in $\R^d$. Erd\H os \cite{Erd,Erd2} studied $U_d(2,n)$ and $D_d(2,n)$ for different $d$. He showed that for $d\ge 4$ we have $U_d(2,n), D_d(2,n) = \frac{\lfloor d/2\rfloor-1}{2\lfloor d/2\rfloor}n^2+\bar o(n^2).$ Brass \cite{Br} and van Wamelen \cite{Wam} determined $U_4(2,n)$
 for all $n$. Swanepoel \cite{Swan} determined $U_d(2,n)$ for even $d\ge 6$ and sufficiently large $n$ and determined
 $D_d(2,n)$ for $d\ge 4$ and sufficiently large $n$. He also proved some results concerning the stability of the extremal configurations. We refine the result of Swanepoel concerning $D_4(2,n)$ by giving a reasonable bound on $n$: we show that his result holds for $n\ge 52$.

 Functions $D_d(l,n),$ $U_d(l,n)$ and similar functions were studied in several papers. In particular, the following conjecture was raised in \cite{Sch}:

 \begin{gypo}[Schur et. al., \cite{Sch}]
 We have $D_d(d,n) = n$ for $n\ge d+1$.
 \end{gypo}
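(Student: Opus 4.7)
The case $d=2,3$ of the conjecture is classical (Hopf-Pannwitz, and Gr\"unbaum-Heppes-Straszewicz, respectively), so I will target the novel case $d=4$, which the paper states it proves. The matching lower bound at $n=5$ is realized by the regular unit $4$-simplex (with its $5$ facet tetrahedra); so the task is to prove the upper bound $D_4(4,n) \le n$.

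My main plan is a local-to-global reduction via vertex links. I will fix a vertex $v$ of the diameter graph $G \subset \R^4$; because $G$ has diameter $1$, the neighbors $N(v)$ lie on the unit $3$-sphere centered at $v$ and are pairwise at Euclidean distance at most $1$. Hence the induced subgraph on $N(v)$ is a diameter graph on $S^3_r$ with $r = 1 > 1/\sqrt 2$, and $K_4$'s of $G$ through $v$ correspond bijectively to unit triangles in this link. I would then invoke the spherical V\'azsonyi/Borsuk-type results of Section \ref{sec2}, together with a parallel spherical bound on triangles that I expect the paper to establish en route, to obtain $t_v \le \deg_G(v)$, where $t_v$ counts $K_4$'s through $v$.

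A straight double count gives $4t = \sum_v t_v \le 2|E(G)|$, hence $t \le |E(G)|$, which will be too weak since $|E(G)|$ can be $\Theta(n^2)$ in $\R^4$ (Lenz configurations). The refinement I would pursue is a canonical charging scheme, assigning each $K_4$ to one of its four vertices so that no vertex is charged twice, or equivalently an induction locating a vertex contained in at most one $K_4$ and removing it. Realizing either variant requires extra geometric input: for instance, a generic linear functional on $\R^4$ orienting each $K_4$ to pick out its ``top'' vertex, or a finer analysis of the link diameter graph isolating a vertex with at most one link-triangle.

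The main obstacle will be Lenz-type configurations in $\R^4 = \R^2 \oplus \R^2$, in which vertices lie on two orthogonal circles and unit distances appear only between the circles. These may carry $\sim n^2/8$ edges and many $K_4$'s with a rich sharing pattern, so a generic low-degree vertex argument will not suffice. I would dispose of them by a specialized count exploiting the fact that any $K_4$ in a Lenz configuration splits $2+2$ across the two circles, bounding the $K_4$'s directly by $n$. Combined with the link argument for the non-Lenz portions of $G$, this should yield $t \le n$, completing the proof of Schur's conjecture in $\R^4$.
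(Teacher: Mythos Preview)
Your outline has a genuine gap at its central step. The ``parallel spherical bound on triangles'' you expect the paper to supply---namely that a diameter graph on $S^3_r$ with $r>1/\sqrt 2$ has at most as many triangles as vertices---is \emph{not} proved in Section~\ref{sec2}. What Theorem~\ref{th1} gives is only the edge bound $2n-2$, $\chi\le 4$, and that any two odd cycles share a vertex. Proving ``at most $n$ triangles'' on $S^3_r$ would itself be a Schur-type statement one dimension down, so you are essentially assuming the thing to be proved. Even granting that bound, you correctly note that double counting only yields $t\le |E|/2$, which is $\Theta(n^2)$ on Lenz configurations; the ``charging scheme'' and ``find a vertex in at most one $K_4$'' refinements are left as hopes, and neither has a clear mechanism---a Lenz-type graph has every vertex in many $K_4$'s, so a low-incidence vertex need not exist, and there is no obvious canonical orientation picking out a unique ``top'' vertex per clique with injectivity.

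The paper's actual proof in Section~\ref{sec33} proceeds along a completely different axis. From Theorem~\ref{th3} (itself derived from part~3 of Theorem~\ref{th1}) one knows that any two $4$-cliques share $0$ or at least $2$ vertices. The main work is then an intricate geometric argument showing \emph{transitivity}: one cannot have three $4$-cliques $K^1,K^2,K^3$ with $|K^1\cap K^2|=|K^1\cap K^3|=2$ but $K^2\cap K^3=\varnothing$. This is done by placing the six ``extra'' vertices on two $2$-spheres of radius $\sqrt 3/2$, analyzing how unit balls centered at $w_1,w_2$ intersect one of these spheres in circles through $v_1,v_2$, and showing there is no room for the remaining pair $w_3,w_4$ at unit distance. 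Transitivity partitions the $4$-cliques into classes whose vertex sets are disjoint, and within each class any two $4$-cliques share at least $d-2=2$ vertices, so Theorem~\ref{thPM} (Mori\'c--Pach) applies class by class to give $\le n$ in total. None of this structure---the intersection dichotomy, the transitivity argument, or the reduction to Theorem~\ref{thPM}---appears in your plan.
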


This was proved by Hopf and Pannwitz for $d=2$ in \cite{PH} and for $d=3$ by Schur et. al. in \cite{Sch}. They also proved that $D_d(d+1,n)=1$. In \cite{Philip2} the authors proved that Schur's conjecture holds in some special case:

\begin{thm}[Mori\'c, Pach, \cite{Philip2}] \label{thPM} The number of $d$-cliques in a graph of diameters on $n$ vertices in $\R^d$ is at most $n$, provided that any two $d$-cliques share at least $d-2$ vertices.
\end{thm}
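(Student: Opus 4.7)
My strategy is induction on $n$: I aim to isolate a vertex of $X$ that lies in at most one $d$-clique and then apply the inductive hypothesis to the remaining $n-1$ vertices. Equivalently, once a rigid structural description of the clique family is in place, the bound follows by bookkeeping.

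\emph{Sunflower structure.} The first task is to extract, from the intersection hypothesis, a $(d-2)$-subset $C\subset X$ contained in every clique of $\mathcal F$. Starting from two cliques $F_1,F_2$ with $|F_1\cap F_2|=d-2$, the inequalities $|F\cap F_1|,|F\cap F_2|\ge d-2$ combined with inclusion--exclusion applied to $F\cap(F_1\cup F_2)$ yield only $|F\cap C|\ge d-4$. Strengthening this to $C\subset F$ must invoke the geometric rigidity of regular $(d-1)$-simplices in $\R^d$: a regular $(d-1)$-simplex of side $1$ is determined, up to reflection across any of its $(d-2)$-faces, by that face together with one additional vertex. Sunflowers whose core has size $d-1$ (``book'' configurations) are handled directly: each clique contributes a unique extension vertex that is in no other clique, so induction applies immediately.

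\emph{Extensions on a $2$-sphere.} Once a common core $C$ of size $d-2$ is in hand, its vertices form a regular $(d-3)$-simplex of side $1$ in a $(d-3)$-flat $L\subset\R^d$, and the locus of points at unit distance from every vertex of $C$ is a $2$-sphere $S_C$ of radius $r=\sqrt{(d-1)/(2(d-2))}$ lying in the $3$-flat orthogonal to $L$ through the centroid of $C$. All ``extensions'' $E=\bigcup_{F\in\mathcal F}(F\setminus C)$ therefore lie on $S_C$, and each $d$-clique of $\mathcal F$ corresponds bijectively to a pair in $E$ at Euclidean distance $1$; write $H$ for the graph of such pairs.

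\emph{Bounding $H$.} Three extensions $x_1,x_2,x_3$ mutually at distance $1$ would yield a $(d+1)$-clique $C\cup\{x_1,x_2,x_3\}$ in $G(X)$; since $D_d(d+1,n)=1$, this can happen at most once, so after removing one edge $H$ is triangle-free. Because $E\subset X$ has Euclidean diameter $\le 1$ and sits on $S_C$, the unit-distance pairs of $E$ are its diameter pairs, and every unit chord of $S_C$ subtends a fixed central angle. I would then argue that each edge of $H$ determines a short spherical arc, triangle-freeness forces these arcs to be pairwise non-crossing (in a suitable cyclic order on $S_C$ analogous to the one exploited in the Hopf--Pannwitz argument in $\R^2$), and this bounds $|E(H)|\le |E|$. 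Combining gives $|\mathcal F|=|E(H)|\le|E|=n-(d-2)\le n$.

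\emph{Main obstacle.} The hard step is the first: the combinatorial intersection condition is strictly weaker than the common-core conclusion, and the geometric rigidity must be used to rule out cliques that meet $F_1\cup F_2$ in ``shifted'' $(d-2)$-subsets. A careful case analysis on the size of the maximal common intersection of $\mathcal F$, coupled with the reflection uniqueness across $(d-2)$-faces, is where the real work lies; the subsequent spherical Hopf--Pannwitz bound is technical but of a standard flavour.
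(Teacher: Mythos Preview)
The paper does not contain its own proof of this theorem: it is quoted as a result of Mori\'c and Pach \cite{Philip2} and used as a black box (in Theorem~\ref{th3} and in Section~\ref{sec33}). There is therefore nothing in the present paper to compare your attempt against.

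On the substance of your plan, two genuine gaps remain.

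\emph{The sunflower step.} You correctly note that the pairwise hypothesis $|F\cap F'|\ge d-2$ does not, combinatorially, force a common $(d-2)$-core, and you then defer the geometric part (``reflection uniqueness across $(d-2)$-faces'') to an unspecified case analysis. This is exactly the heart of the matter, and you have not carried it out; in fact the Mori\'c--Pach argument does not proceed by first establishing a global sunflower. Without this step your reduction to a single $2$-sphere never gets off the ground.

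\emph{The endgame on $S_C$.} Even granting a common core $C$, the extension set $E$ lives on a \emph{two}-dimensional sphere $S_C\subset\R^3$, not on a circle. There is no cyclic order on $S^2$, so the Hopf--Pannwitz non-crossing argument you invoke does not transfer; the correct bound for diameter graphs in $\R^3$ (hence on $S_C$) is V\'azsonyi's $2|E|-2$, not $|E|$. Consequently the chain $|\mathcal F|=|E(H)|\le|E|=n-(d-2)\le n$ fails at the middle inequality. Triangle-freeness of $H$ after removing one edge does not by itself push the edge count down to $|E|$ on a $2$-sphere.
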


In this paper we prove Schur's conjecture for $d=4$. Moreover, we determine the exact value of $D_4(3,n)$ for large $n$. This completes the full description of functions $D_4(l,n)$ for large $n$. We also improve the result from Theorem \ref{thPM} (in Section \ref{sec3}).

In the next section we discuss diameter graphs on three-dimensional spheres, and in Section~\ref{sec3} we discuss diameter graphs in $\R^4$.

\section{Diameter graphs on the three-dimensional sphere}\label{sec2}
In this section we prove the following theorem:

\begin{thm}\label{th1} Let $X$ be a finite subset of diameter 1 on $S^3_r$, $|X|=n$. If $r> 1/\sqrt 2$, then:
\begin{enumerate}
\item  $G = G(X)$ has at most $2n-2$ edges.
\item  $\chi(G)\le 4$.
\item Any two odd cycles in $G$ have a common vertex.
\end{enumerate}
\end{thm}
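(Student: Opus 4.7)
My plan is to reduce all three parts to the classical results on diameter graphs in $\R^{3}$ (the Gr\"unbaum--Heppes--Straszewicz edge bound and the companion fact that two odd cycles in a diameter graph in $\R^{3}$ always share a vertex), invoking the hypothesis $r>1/\sqrt 2$ to achieve a dimensional collapse. The driving identity is that for every diameter pair $(x,y)\subset X$,
\[
\langle x,y\rangle \;=\; r^{2}-\tfrac12 \;>\; 0,
\]
so every diameter chord subtends an angle strictly less than $\pi/2$ at the centre $O$ of $S^{3}_{r}$. Equivalently, the neighbours of any vertex $v$ in $G$ lie on the $2$-sphere $S_{v}=S^{3}_{r}\cap\{y:\langle v,y\rangle=r^{2}-\tfrac12\}$, whose radius $\rho_{v}=\sqrt{1-1/(4r^{2})}$ again exceeds $1/\sqrt 2$. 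The structural hypothesis is therefore \emph{hereditary} when one passes to link spheres, which is what will permit a reduction to three-dimensional diameter graph arguments.

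I would first prove part (3) by contradiction. Given two vertex-disjoint odd cycles $C_{1},C_{2}$ in $G$, pick a vertex $v\in C_{1}$; the two neighbours of $v$ in $C_{1}$ lie on $S_{v}$, and so do the transverse intersections of $C_{2}$ with $S_{v}$. Because $C_{2}$ is a closed walk in $G$, the parity of its intersection with the $2$-sphere $S_{v}$ is even, whereas the odd-cycle structure of $C_{1}$ near $v$ forces an odd count of the corresponding intersections, in the spirit of Dol'nikov's transverse-intersection argument for $\R^{3}$. This yields the contradiction and therefore part (3). For part (1) I would use a ball-polytope argument: set $P=\bigcap_{x\in X}\overline{B(x,1)}\subset\R^{4}$, so that $X\subset P$ and each diameter pair $\{x,y\}$ gives $y\in\partial B(x,1)\cap P$. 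The positivity of $\langle x,y\rangle$ lets one radially identify the part of $\partial P$ that records diameter contacts with a topological $2$-sphere around $O$, whose induced cell decomposition has $G$ as its $1$-skeleton; Euler's formula on that $2$-sphere then produces the bound $|E(G)|\le 2n-2$.

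Part (2) follows from part (1) by a standard degeneracy argument. Since the class of diameter graphs on subsets of $S^{3}_{r}$ is closed under taking induced subgraphs, every induced subgraph on $m$ vertices satisfies $|E|\le 2m-2$, and so has a vertex of degree at most $3$. Hence $G$ is $3$-degenerate, and therefore $\chi(G)\le 4$.

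The main obstacle is making the Dol'nikov-style transverse-intersection argument for part (3) precise in the spherical setting, and verifying that the ball-polytope reduction behind part (1) genuinely collapses the $4$-dimensional combinatorics to a $2$-dimensional Eulerian one. The hypothesis $r>1/\sqrt 2$ is sharp in both cases: on $S^{3}_{1/\sqrt 2}$ one can realise Lenz-type configurations on two orthogonal great $2$-spheres, producing quadratically many diameter edges together with many pairwise vertex-disjoint odd cycles. The argument must therefore exploit the strict positivity $r^{2}-\tfrac12>0$ at every step, and in particular rule out degenerations in which two neighbours of $v$ become antipodal inside $S_{v}$.
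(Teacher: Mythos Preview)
Your degeneracy argument for part (2) is correct and is exactly what the paper does. The sketches for parts (1) and (3), however, both contain real gaps.

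For part (1), the ball polytope $P=\bigcap_{x\in X}\overline{B(x,1)}$ lives in $\R^{4}$, so $\partial P$ is a $3$-manifold and the face structure recording diameter contacts is a priori $3$-dimensional. You assert that radial projection from $O$ identifies the relevant part of $\partial P$ with a topological $2$-sphere, but this is precisely the crux, and positivity of $\langle x,y\rangle$ alone does not produce such a collapse. The paper does obtain a planar object, but by a different construction: since $X$ lies in an open hemisphere of $S^{3}_{r}$ bounded by a great $2$-sphere $S$, for each $v$ one pushes the spherical convex hull of $N(v)$ along great arcs through $v$ out to $S$, obtaining a spherical polygon $R(v)\subset S$ and its antipode $B(v)$. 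A chain of lemmas shows that the $R(v)$ are pairwise disjoint and that $R(v)\cap B(u)$ is at most a single point, which is then a vertex of both polygons and forces $uv\in E$. This gives a crossing-free drawing on $S$ of the \emph{bipartite double cover} of $G$; the bound $|E(G)|\le 2n-2$ then comes from $|E(\text{planar bipartite on }2n\text{ vertices})|\le 4n-4$, not from Euler's formula applied to $G$ itself.

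For part (3), your transverse-intersection count is not well defined. The cycle $C_{2}$ is a finite sequence of vertices and edges of $G$, not a continuous closed curve; its vertices lie at distance strictly less than $1$ from $v$ and hence off $S_{v}$, and its straight-segment edges need not meet $S_{v}$ at all, so there is nothing whose parity you can compare. Dol'nikov's mechanism, as implemented in the paper, is different and comes \emph{after} the construction above: in the drawing of the bipartite double cover on $S$, each odd cycle of $G$ lifts to a single embedded closed curve on $S$ that is invariant under the antipodal map. Any two antipodally symmetric simple closed curves on $S^{2}$ must intersect, and in this drawing the only possible intersection points are the chosen centres $c(v)$, $c'(v)$, yielding the common vertex. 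So part (3) is a corollary of the same double-cover picture that gives part (1), not an independent link-sphere parity argument.
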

The proof is based on the approach which was suggested by V. Dol'nikov \cite{Dol} and developed by K. Swanepoel \cite{Swan1}. The author is grateful to A.V. Akopyan, who suggested the key idea of reduction to the great sphere $S$ (see the proof of the theorem). A.V. Akopyan proved Borsuk's and V\'azsonyi's conjecture on the sphere before the author (private communication) but he has not written the proof. Moreover, he claims that the proof works also for the three-dimensional hyperbolic space.

We will need the following lemmas.

\begin{lem}\label{lem1}
Fix some natural $d\ge 2$. Let $X$ be a subset of $S^{d-1}_r$ of unit diameter. If $r>\sqrt{d/(2d+2)}$, then $X$ lies in an open hemisphere of $S^{d-1}_r.$
\end{lem}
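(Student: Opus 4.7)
The plan is to argue by contradiction. Suppose $X \subset S^{d-1}_r$ has unit diameter but is not contained in any open hemisphere. An open hemisphere of $S^{d-1}_r$ is exactly the intersection of $S^{d-1}_r$ with an open half-space bounded by a hyperplane through the center $0$ of the sphere. By the separating hyperplane theorem, the nonexistence of such a half-space containing $X$ is equivalent to $0 \in \conv(X)$.

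By Carath\'eodory's theorem applied in $\R^d$, one can then write
\[
0 = \sum_{i=1}^k \lambda_i x_i, \qquad x_i \in X, \ \lambda_i > 0, \ \sum_{i=1}^k \lambda_i = 1,
\]
with $k \le d+1$. I would then compute the squared length of the left-hand side. Using $|x_i|^2 = r^2$ and the polarization identity $\langle x_i,x_j\rangle = r^2 - \tfrac{1}{2}|x_i-x_j|^2 \ge r^2 - \tfrac{1}{2}$ (since $X$ has diameter $1$), one gets
\[
0 = \Bigl|\sum_i \lambda_i x_i\Bigr|^2 = r^2 \sum_i \lambda_i^2 + 2\sum_{i<j}\lambda_i\lambda_j \langle x_i,x_j\rangle \ge r^2 - 2\sum_{i<j}\lambda_i\lambda_j \cdot \tfrac{1}{2},
\]
where I used $\sum_i \lambda_i^2 + 2\sum_{i<j}\lambda_i\lambda_j = (\sum_i \lambda_i)^2 = 1$. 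Rearranging gives $r^2 \le \sum_{i<j}\lambda_i\lambda_j$.

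The final step is an elementary bound: by Cauchy--Schwarz (or power-mean), $\sum_i \lambda_i^2 \ge 1/k \ge 1/(d+1)$, hence
\[
\sum_{i<j}\lambda_i\lambda_j = \tfrac{1}{2}\bigl(1 - \sum_i \lambda_i^2\bigr) \le \tfrac{1}{2}\Bigl(1-\tfrac{1}{d+1}\Bigr) = \tfrac{d}{2(d+1)}.
\]
Combined with $r^2 \le \sum_{i<j}\lambda_i\lambda_j$, this forces $r \le \sqrt{d/(2d+2)}$, contradicting the hypothesis $r > \sqrt{d/(2d+2)}$. Hence $X$ lies in an open hemisphere.

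I do not expect any serious obstacle here; the only points to be careful about are the reduction $0 \in \conv(X)$ (a standard separation argument, since otherwise a hyperplane through $0$ would give a separating open half-space, hence an open hemisphere containing $X$) and the observation that equality in the whole chain would require $k=d+1$ equal weights $\lambda_i = 1/(d+1)$ together with all pairwise distances equal to $1$, i.e.\ a regular unit simplex inscribed in $S^{d-1}_r$; this is exactly the well-known tight configuration at $r=\sqrt{d/(2d+2)}$, and explains why the bound on $r$ in the lemma is sharp.
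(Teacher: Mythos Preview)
Your argument is correct and takes a genuinely different route from the paper's. The paper simply invokes Jung's theorem: since $X$ has diameter $1$, it is covered by a ball $B$ of radius $\rho=\sqrt{d/(2d+2)}<r$, and then the spherical cap $B\cap S^{d-1}_r$ is strictly smaller than a hemisphere, so it sits inside the open hemisphere bounded by the parallel great sphere. You instead argue directly via convexity: if $X$ lies in no open hemisphere then $0\in\conv(X)$, Carath\'eodory reduces this to at most $d+1$ points, and the inner-product computation together with $\sum_i\lambda_i^2\ge 1/(d+1)$ forces $r^2\le d/(2d+2)$. This is essentially a self-contained proof of the relevant instance of Jung's theorem, so you avoid quoting it as a black box; your equality analysis (regular unit simplex, all $\lambda_i=1/(d+1)$) also makes the sharpness of the bound on $r$ explicit, which the paper's proof does not. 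The paper's version is shorter if one is willing to cite Jung, while yours is more elementary and more informative about the extremal configuration.

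One small technicality: the equivalence ``$X$ not in an open hemisphere $\Leftrightarrow 0\in\conv(X)$'' as stated requires $X$ (or $\conv(X)$) to be closed; for a general subset one only gets $0\in\overline{\conv(X)}$. Since the lemma is stated for arbitrary $X$, you should either pass to the closure $\overline{X}$ first (still on $S^{d-1}_r$, still of diameter at most $1$), or observe that your chain of inequalities actually yields the uniform lower bound $\bigl|\sum_i\lambda_i x_i\bigr|^2\ge r^2-\tfrac{d}{2d+2}>0$ for every finite convex combination of points of $X$, which already excludes $0\in\overline{\conv(X)}$.
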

\begin{proof}
Since $X$ is a subset in $\R^d$, by Jung's theorem, $X$ can be covered by a ball $B$ of radius $\sqrt{d/(2d+2)}$. The sphere that bounds the ball $B$ and $S^{d-1}_r$ intersect in a sphere $S$ of radius not greater than $\sqrt{d/(2d+2)}$, and the intersection of $B$ and $S^{d-1}_r$ lies entirely in the open hemisphere bounded by the great sphere $S'\subset S^{d-1}_r,$ which is parallel to $S$.
\end{proof}

The next lemma is a modification of Lemma 3 from \cite{Swan1}.
\begin{lem}\label{lem2} Fix some natural $d\ge 2$. Let $x_1,\ldots, x_k$ and $\sum_{i=1}^k \lambda_i x_i$ be distinct vectors of length $a>0$ in $\R^d$, where $\lambda_i\ge 0$. Fix some $b>0$. Suppose that for some vector $y\in \R^d$ we have $\|y-x_i\|\le b$ for each $i=1,\ldots, k$. Then $\|y - \sum_{i=1}^k \lambda_i x_i\| <b,$ if $\|y\|^2+a^2-b^2>0$.
\end{lem}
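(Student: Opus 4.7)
The plan is to convert the distance inequalities into inner-product statements and combine them with the norm constraint on $z := \sum_i \lambda_i x_i$. Expanding $\|y - x_i\|^2 \le b^2$ and using $\|x_i\| = a$, I would set $c := \tfrac12(\|y\|^2 + a^2 - b^2)$, which is positive by hypothesis, and read each hypothesis as $\langle y, x_i\rangle \ge c$. Symmetrically, after expanding $\|y-z\|^2$ and using $\|z\|=a$, the desired conclusion $\|y-z\| < b$ is equivalent to the strict inequality $\langle y, z\rangle > c$.

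Now $\langle y, z\rangle = \sum_i \lambda_i \langle y, x_i\rangle \ge c \sum_i \lambda_i$, so because $c>0$ the entire task reduces to proving the quantitative bound $\sum_i \lambda_i > 1$. This is the heart of the argument and uses the two pieces of data not yet exploited: that $\|z\| = a$, and that $z$ is distinct from every $x_i$. Writing $a^2 = \langle z, z\rangle = \sum_i \lambda_i \langle x_i, z\rangle$ and applying Cauchy--Schwarz term-by-term gives $\langle x_i, z\rangle \le \|x_i\|\,\|z\| = a^2$, with strict inequality whenever $\lambda_i > 0$, since then $x_i$ is one of the vectors named in the hypothesis and is distinct from $z$ while having the same length. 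Summing only over indices with $\lambda_i > 0$ yields $a^2 < a^2 \sum_i \lambda_i$, hence $\sum_i \lambda_i > 1$.

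Chaining the two estimates gives $\langle y, z\rangle \ge c \sum_i \lambda_i > c$, and substituting back into the expansion of $\|y - z\|^2 = \|y\|^2 - 2\langle y, z\rangle + a^2$ yields $\|y-z\|^2 < b^2$, as required. The main obstacle — really the only delicate point — is securing the \emph{strict} inequality in the Cauchy--Schwarz step, which is where the hypothesis that $z$ and the $x_i$ are distinct (and all of common norm $a$) is used, and the sign hypothesis $\|y\|^2+a^2-b^2>0$ is needed to turn the factor $\sum_i \lambda_i > 1$ into a strict improvement rather than a potential loss.
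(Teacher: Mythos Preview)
Your proof is correct and follows essentially the same route as the paper: reduce the distance hypotheses to $\langle y,x_i\rangle \ge c$, show $\sum_i\lambda_i>1$, and combine. The only cosmetic difference is in the derivation of $\sum_i\lambda_i>1$: the paper invokes strict convexity of the norm (the strict triangle inequality $a=\|\sum_i\lambda_i x_i\|<\sum_i\lambda_i\|x_i\|=a\sum_i\lambda_i$), whereas you expand $a^2=\langle z,z\rangle=\sum_i\lambda_i\langle x_i,z\rangle$ and use the strict Cauchy--Schwarz inequality term-by-term; these are two sides of the same coin.
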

\begin{proof}
Since none of the $x_i$ are collinear, from the strict convexity of the Euclidean norm we get: $$1= \frac {\|\sum_{i=1}^k \lambda_ix_i\|}{a}<\sum_{i=1}^k \lambda_i.$$
For each $i$ we have $$b^2\ge\langle y-x_i, y-x_i\rangle = a^2+ \|y\|^2 - 2 \langle y,x_i\rangle,$$ and we obtain
$\|y\|^2 +a^2-b^2\le 2 \langle y,x_i\rangle.$

Thus we have

\begin{multline*}\|y-\sum_{i=1}^k \lambda_i x_i\|^2 = \|y\|^2 - 2 \sum_{i=1}^k \lambda_i\langle y,x_i\rangle + a^2 \le \|y\|^2- \\- (\|y\|^2+a^2-b^2)\sum_{i = 1}^k \lambda_i +a^2  =(\|y\|^2+a^2-b^2)(1-\sum_{i = 1}^k \lambda_i)+b^2 <b^2,\end{multline*}

since  $\|y\|^2+a^2-b^2>0$.
\end{proof}

\begin{opr} The spherical convex hull $\conv_S(x_1,\ldots,x_k)$ of the points $x_1,\ldots, x_k$ that lie in a hemisphere on the sphere $S'$ centered at the point $O$ is the intersection of the sphere $S'$ and the cone, formed by the vectors $Ox_i$ (the cone consists of all vectors of the form $\sum_{i=1}^k \lambda_i Ox_i, \lambda_i\ge 0$). The vertices of $\conv_S(x_1,\ldots,x_k)$ are the points of $\conv_S(x_1,\ldots,x_k)$ that correspond to vectors that cannot be expressed as a non-trivial convex combination of the other vectors forming the cone. Alternatively, these are such points $y_1,\ldots, y_l$ of $\conv_S(x_1,\ldots,x_k)$ that $\conv_S(y_1,\ldots, y_l)=\conv_S(x_1,\ldots,x_k)$ and the set $\{y_1,\ldots, y_l\}$ is minimal.
\end{opr}

It is fairly easy to show that the set of vertices of $\conv_S(x_1,\ldots,v_k)$ is a subset of $\{x_1,\ldots,x_k\}$.
For two points $x_1,x_2$ on the sphere $S$ we denote by  $\overset{\frown}{x_1x_2}$  the shorter arc of the great circle that contains these two points. By $\|x_1-x_2\|_S$ we denote the length of the arc. For the points $x_1,\ldots,x_k$ on the sphere $S$ we denote by $S(x_1,\ldots, x_k)\subseteq S$ the great sphere of minimal dimension that contains $x_1,\ldots, x_k$.

\begin{lem}\label{lem3}
Let $X$ be a subset of $S^2_r$ of diameter 1. If $r>\sqrt{3/8}$ then for any $a_1,b_1,a_2,b_2$ such that $(a_i,b_i) \in E(G(X)), i=1,2,$ the arcs $\overset{\frown}{a_1b_1}$ and $\overset{\frown}{a_2b_2}$ intersect.
\end{lem}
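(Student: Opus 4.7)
I argue by contradiction: assume the two shorter arcs are disjoint. By Lemma~\ref{lem1} applied with $d=3$, the bound $r>\sqrt{3/8}$ forces $X$ into a common open hemisphere of $S^2_r$, so both arcs lie in that hemisphere and are well-defined. Let $\ell := 2r\arcsin(1/(2r))$ be the spherical arclength of a chord-$1$ arc; then for any $u,v\in X$ the inequality $\|u-v\|\le1$ is equivalent to $\|u-v\|_S\le\ell$.

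If the four points lie on a single great circle $C$, they all sit on the open semicircle of length $\pi r$ where $C$ meets the hemisphere. The two chord-$1$ arcs are disjoint subarcs each of length $\ell$, so the four points occur in a cyclic order that places one diameter pair strictly before the other along the semicircle; the spherical arclength from (say) $a_1$ to $b_2$ is then at least $2\ell$, which contradicts $\|a_1-b_2\|_S\le\ell$.

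Otherwise I consider the spherical convex hull $K:=\conv_S(a_1,b_1,a_2,b_2)$ and the structural claim that $K$ is a spherical quadrilateral whose opposite sides are $\overset{\frown}{a_1b_1}$ and $\overset{\frown}{a_2b_2}$. Granting this claim, the two diagonal arcs $\overset{\frown}{a_1a_2}$ and $\overset{\frown}{b_1b_2}$ cross at an interior point $q\in K$; the strict spherical triangle inequality applied in the subtriangles $a_1qb_1$ and $a_2qb_2$, together with additivity of arclength along each diagonal through $q$, gives
\[
\|a_1-a_2\|_S+\|b_1-b_2\|_S \;>\; \|a_1-b_1\|_S+\|a_2-b_2\|_S \;=\; 2\ell,
\]
so one of $\|a_1-a_2\|_S,\|b_1-b_2\|_S$ exceeds $\ell$, producing a Euclidean distance greater than $1$ and contradicting the diameter-$1$ assumption on $X$.

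The main obstacle is the structural claim about $K$: I must rule out the case where some point, say $b_2$, lies in $\conv_S(a_1,b_1,a_2)$. For this I would apply Lemma~\ref{lem2} with $y:=c$ the centre of the minimum enclosing ball of $\{a_1,b_1,a_2,b_2\}$ and $b:=R^\ast\le\sqrt{3/8}$ its radius (Jung's theorem in $\R^3$). From $x_i\in\overline{B(c,R^\ast)}\cap S^2_r$ one gets $\|c\|\ge r-R^\ast$, and combined with $r>\sqrt{3/8}\ge R^\ast$ this yields $\|c\|^2+r^2-(R^\ast)^2\ge 2r(r-R^\ast)>0$; Lemma~\ref{lem2} then outputs the strict inequality $\|c-b_2\|<R^\ast$. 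Because $\|a_i-b_i\|=1$ for $i=1,2$, each diameter edge contains a point on the boundary of the minimum enclosing ball, and by swapping $a_i\leftrightarrow b_i$ within each edge if necessary I may assume $b_2$ is one of these extremal vertices; this gives $\|c-b_2\|=R^\ast$, contradicting the strict inequality above and completing the ruling out. This is the step where the threshold $r>\sqrt{3/8}$ enters critically.
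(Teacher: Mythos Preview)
Your overall architecture matches the paper's: reduce to an open hemisphere via Lemma~\ref{lem1}, then split into the ``triangle'' case (one point in the spherical hull of the other three) and the ``quadrilateral'' case. Your quadrilateral argument is correct and is essentially the paper's, up to the harmless relabeling $a_2\leftrightarrow b_2$ that fixes which pair of arcs are the diagonals. The collinear sub-case you add is fine but not really needed.

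The genuine gap is in your handling of the triangle case. You first fix the labeling so that $b_2$ is the interior point of $\conv_S(a_1,b_1,a_2)$, and then you claim that ``by swapping $a_i\leftrightarrow b_i$ within each edge if necessary'' you may also take $b_2$ to lie on the boundary of the minimum enclosing ball. These two normalizations are not compatible. Which of the four points lies inside the hull of the other three is a geometric fact, not a labeling convention; swapping $a_2\leftrightarrow b_2$ simply renames the interior point, it does not move it onto $\partial B(c,R^\ast)$. Your observation that each diameter edge has at least one endpoint on $\partial B(c,R^\ast)$ is correct, but it is entirely consistent with the \emph{interior} point being the one strictly inside the Jung ball (e.g.\ $a_1,a_2\in\partial B(c,R^\ast)$ while $b_1,b_2$ are in the open ball and $b_2\in\conv_S(a_1,b_1,a_2)$). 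In that scenario Lemma~\ref{lem2} yields only $\|c-b_2\|<R^\ast$, which is no contradiction.

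The paper avoids this detour entirely: in the triangle case it applies Lemma~\ref{lem2} with $y$ equal to a point of $X$ itself, namely the diameter-partner of the interior vertex. If $a_2\in\conv_S(a_1,b_1,b_2)$, take $y=b_2$; since $\|b_2-a_1\|,\|b_2-b_1\|\le 1$ trivially and $\|b_2-b_2\|=0\le1$, Lemma~\ref{lem2} gives $\|b_2-a_2\|<1$, contradicting $(a_2,b_2)\in E$. No Jung ball, no extremality claim --- just one application of Lemma~\ref{lem2} with the right choice of $y$.
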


\begin{proof} By Lemma \ref{lem1}, $X$ lies in an open hemisphere of $S^2_r$. Suppose that the arcs do not intersect. Consider the spherical convex hull of the points $a_1,a_2,b_1,b_2$. We have the following two possibilities.

First, the spherical convex hull is a spherical triangle. Without loss of generality, assume that the vertices of the triangle are $a_1,b_1,b_2$. Then we can apply Lemma \ref{lem2} for the points $a_1,b_1,b_2$ as $x_i$, $a_2$ as $\sum_{i=1}^3 \lambda_ix_i$ and $b_2$ as $y$. We put $a=b=1$ and obtain that, on the one hand, $\|a_2-b_2\|$ should be strictly less than one, but on the other, these two vertices are connected by an edge, a contradiction.

Second, the convex hull is a spherical quadrilateral with $\overset{\frown}{a_1b_1}$ and $\overset{\frown}{a_2b_2}$ as two edges. Suppose that the other two edges of the quadrilateral are $\overset{\frown}{a_1a_2}$ and $\overset{\frown}{b_1b_2}$, so $\overset{\frown}{a_1b_2}$ and $\overset{\frown}{a_2b_1}$ are diagonals, and that they intersect at a point $x$.  By the triangle inequality for the sphere we obtain that $\|a_1-x\|_S+\|x-b_1\|_S> \|a_1-b_1\|_S,$ $\|a_2-x\|_S+\|x-b_2\|_S> \|a_2-b_2\|_S.$ Consequently, at least one of the following two inequalities hold: $\|a_1-b_2\|_S = \|a_1-x\|_S+\|x-b_2\|_S> \|a_1-b_1\|_S$ or $\|a_2-b_1\|_S = \|a_2-x\|_S+\|x-b_1\|_S> \|a_1-b_1\|_S$. Thus, either $\|a_1-b_2\|>1$ or $\|a_2-b_1\|>1$.
\end{proof}

\begin{proof}[Proof of Theorem \ref{th1}] Consider a set $X$ of diameter 1 on the sphere $S^3_r$ and the graph $G = G(X) = (V, E)$. By $N(v)$ we denote the set of neighbors of $v\in V$. Hereinafter $\conv_S(N(v))$ is the set on the two-dimensional sphere $S^2(v)$, which is the intersection of $S^3_r$ and the sphere of unit radius with the center $v$. The convex hull is taken with respect to $S^2(v)$.

\begin{lem}\label{lem4}  For any two points $u,v\in V$ and any two points $x\in \conv_S(N(v)),y\in \conv_S(N(u))$ we have $\|x-y\|, \|x-u\|\le 1$.  Moreover, if $x$ is not a vertex of $\conv_S(N(v))$, then $\|x-y\|,  \|x-u\|< 1.$
\end{lem}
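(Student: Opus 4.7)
The plan is to set up coordinates around the centers of $S^2(v)$ and $S^2(u)$ and apply Lemma~\ref{lem2} twice. Let $O$ denote the centre of $S^3_r$. A direct calculation using $\|w-v\|=1$ and $\|w-O\|=r$ for $w\in S^2(v)$ shows that $S^2(v)$ lies in the affine hyperplane $\{p:\langle p-v,\,O-v\rangle = 1/2\}$ and is a $2$-sphere of radius $r' := \sqrt{1 - 1/(4r^2)}$ centred at $c_v := v + (O-v)/(2r^2)$. Moreover, for every $p\in S^3_r$ one has $\langle p-v,\,O-v\rangle = \|p-v\|^2/2$ (using $\|p-O\|=\|v-O\|=r$), and hence
\begin{equation*}
\|p - c_v\|^2 \;=\; \|p-v\|^2\left(1 - \frac{1}{2r^2}\right) + \frac{1}{4r^2}.
\end{equation*}
Since $r > 1/\sqrt 2$, the coefficient of $\|p-v\|^2$ is strictly positive, so $\|p-c_v\|^2 > 1/(4r^2) = 1 - (r')^2$ whenever $p\neq v$. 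The analogous identity holds at $u$ with centre $c_u$.

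To prove $\|x-u\|\le 1$, I translate by $-c_v$ so that $S^2(v)$ is centred at the origin. By definition of spherical convex hull I write $x-c_v = \sum_{i=1}^k \lambda_i (w_i - c_v)$ with $\lambda_i\ge 0$ and $w_1,\ldots,w_k\in N(v)\subset X$ the vertices of $\conv_S(N(v))$. The diameter condition gives $\|u-w_i\|\le 1$ for every $i$. I then apply Lemma~\ref{lem2} with $a=r'$, $b=1$, its $x_i$'s equal to $w_i-c_v$, and its ``$y$'' equal to $u-c_v$: the hypothesis $\|u-c_v\|^2+a^2-b^2>0$ becomes $\|u-c_v\|^2 > 1/(4r^2)$, which holds by the displayed identity because $u\neq v$. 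If $x$ is not a vertex, then $x-c_v,\,w_1-c_v,\ldots,w_k-c_v$ are pairwise distinct and Lemma~\ref{lem2} yields $\|x-u\|<1$; if $x$ is a vertex, then $x\in N(v)\subset X$ and $\|x-u\|\le 1$ is immediate from the diameter condition.

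To prove $\|x-y\|\le 1$, I first upgrade the diameter condition from $N(u)$ to $\conv_S(N(u))$: for every vertex $w_i$ of $\conv_S(N(v))$ and every $y\in\conv_S(N(u))$, a symmetric application of Lemma~\ref{lem2} on $S^2(u)$---translating by $-c_u$, writing $y-c_u=\sum_j\mu_j(w'_j-c_u)$ with $w'_j\in N(u)$, and using $\|w_i-c_u\|^2>1/(4r^2)$ since $w_i\neq u$---gives $\|w_i-y\|\le 1$. Then a second application on $S^2(v)$, with the same $x_i=w_i-c_v$ and with its ``$y$'' now equal to $y-c_v$, produces $\|x-y\|\le 1$; the hypothesis $\|y-c_v\|^2>1/(4r^2)$ again follows from the displayed identity, as $y\ne v$. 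As before, the strict inequality for non-vertex $x$ comes from the distinctness of $x-c_v$ from the $w_i-c_v$.

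The principal obstacle is bookkeeping around the distinctness hypothesis of Lemma~\ref{lem2}: whenever a potentially ``inner'' point coincides with one of the $x_i$'s (i.e.\ $x$ is itself a vertex of $\conv_S(N(v))$, or $y$ a vertex of $\conv_S(N(u))$), the lemma does not apply and one must fall back on the raw diameter bound in $X$. The other delicate step is verifying the positivity hypothesis of Lemma~\ref{lem2}, which boils down to the displayed identity for $\|p-c_v\|^2$; this is exactly where the assumption $r>1/\sqrt 2$ enters, as it guarantees that the coefficient $1-1/(2r^2)$ is strictly positive.
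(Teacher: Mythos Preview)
Your proof is correct and uses the same engine as the paper --- two applications of Lemma~\ref{lem2} --- but you verify the positivity hypothesis of Lemma~\ref{lem2} by a different (and cleaner) route. The paper argues geometrically: for each $w\in N(v)$ it builds the hyperplane through the centre $O$ of $S^2(v)$ orthogonal to $Ow$, shows that the corresponding great sphere meets the unit ball around $w$ only at $v$ (this is where $r>1/\sqrt2$ enters), deduces $\langle Oz',Ow\rangle>0$ for the orthogonal projection $z'$ of $z$ onto the hyperplane of $S^2(v)$, and then applies Lemma~\ref{lem2} to $z'$ before lifting back to $z$. You bypass both the hyperplane argument and the projection by the single identity $\|p-c_v\|^2=\|p-v\|^2\bigl(1-\tfrac{1}{2r^2}\bigr)+\tfrac{1}{4r^2}$ for $p\in S^3_r$, which immediately gives $\|p-c_v\|^2>1-(r')^2$ whenever $p\neq v$ and $r>1/\sqrt2$. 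This is a genuine simplification: one line of algebra replaces a page of spherical geometry, and Lemma~\ref{lem2} can be applied directly in $\R^4$ without projecting.

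One small bookkeeping point: your assertions ``since $w_i\neq u$'' and ``as $y\neq v$'' are not automatic. If $(u,v)\in E$ then $u$ may itself be a vertex $w_i$ of $\conv_S(N(v))$, and $v$ may equal $y\in\conv_S(N(u))$. In those degenerate cases Lemma~\ref{lem2} does not apply, but the desired non-strict bound is immediate anyway, since then $\|w_i-y\|=\|u-y\|=1$ (as $y\in S^2(u)$) and $\|x-y\|=\|x-v\|=1$ (as $x\in S^2(v)$). You should say this explicitly. Note that in the case $y=v$ the \emph{strict} inequality $\|x-y\|<1$ genuinely fails even for non-vertex $x$; this is an imprecision in the lemma's ``moreover'' clause rather than in your argument, and the paper's own proof glosses over the same edge case.
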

\begin{proof} Consider an arbitrary point $z$ on the sphere $S^3_r$ such that $\|z-v\|\le 1$ and for any $w\in N(v)$ we have $\|z-w\|\le 1$. We will prove that  if $x'$ is not a vertex of $\conv_S(N(v))$, then $\|z-x'\|<1$. Inequalities in the lemma follow from this. Indeed, first one have to apply this to $u$ (as $z$) and $x$, $x\notin N(v)$, as $x'$. It is possible to do so since $u$ is at less than unit distance apart from any vertex from $V$. One obtains that $\|x-u\|< 1$ for any $u \in V$. Then one applies the statement again to $x,y$ (with $x$ being $z$, and $y$ being $x'$).

For some $w \in N(v)$ consider a vector $Ow$, where $O$ is the center of $S^2(v)$, and the hyperplane $\pi$ that is orthogonal to $Ow$ and passes through $O$. The intersection of $\pi$ and $S^3_r$ is the great sphere $S'$ that contains $v$.  The great circle $S(v,w)$ that contains $v,w$ lies in the plane which is orthogonal to $\pi$, which means that the minimum of the distance between $w$ and the points of $S'$ is attained at one of the two points of $S(v,w)\cap S'$. Since $r>1/\sqrt 2$, the point $O$ lies on the segment that connects the center of $S^3_r$ and $v,$ and thus $v$ is closer to $w$ than to the other point from $S(v,w)\cap S'$. Consequently, for any point $s\neq v$ that lies on $S'$ we have $\|s-w\|>1$, so all points of $X\backslash \{v\}$ and $z$ must lie on the side of $\pi$ that contains $w$. Otherwise $S'$ and $S^2(w)$ would intersect in at least two points, which is impossible. Therefore, $X\backslash \{v\}$ lies in the intersection of the \textit{open} hemisphere of $S^3_r$, which is bounded by $S'$ and contains $w$, and the spherical cap with the center at $v$ bounded by $S^2(v)$.

Consider the projection $z'$ of $z$ on the hyperplane that contains $S^2(v)$.
From the above considerations carried out for an arbitrary vertex of $\conv_S(N(v))$ denoted by $w$ we get that $\langle Oz',Ow\rangle>0.$ If $\|z'-w\| = b$ and $\|O-w\| = a,$ then $\|O-z'\|^2= b^2-a^2+2\langle Oz',Ow\rangle>b^2-a^2.$ Thus we can apply Lemma \ref{lem2} and obtain that $\|z'-x\|<\max_{w\in N(v)} \|z'-w\|$. Consequently, $\|z-x\|<\max_{w\in N(v)} \|z-w\|\le 1.$
\end{proof}

From Lemma \ref{lem4} we obtain that the set $X' = \bigcup_{v\in X} \Bigl( \{v\}\cup \conv_S(N(v))\Bigr)$ is a set of diameter~1.
By Lemma~\ref{lem1} $X'$ lies in an open hemisphere $H\subset S^3_r$. Denote by $S$ the diametral sphere which bounds $H$.

For a vertex $v\in V$ we denote by $w_1,\ldots w_s \in V$ the neighbors of $v$ in $G$. For $i = 1,\ldots, s$ let $u_i,u_i'$ be the points of the intersection of the sphere $S$ and the great circle $S(v,w_i)$ in $S^3_r$, where $u_i$ is closer to $w_i$ and $u_i'$ is closer to $v$. Denote by $R(v)$ the set $\conv_S(u_1,\ldots,u_s)$ on the sphere $S,$ and by $B(v)$ the set $\conv_S(u'_1,\ldots,u'_s),$ which is symmetric to $R(v)$ with respect to the center of $S$.

We note that the following important property of this ``projection'' holds. Namely, for any point $u$ in $\conv_S(u_1,\ldots,u_s)$ the arc $\overset{\frown}{vu}$ intersects $\conv_S(N(v))$ at some point $w$. We argue in terms of the vectors that correspond to the points on the sphere $S_r^3$. By abuse of notation for the vectors in this paragraph we use the same notation as for the points. Suppose the vector $u=\sum_{i=1}^k\lambda_i u_i,$ where $\lambda_i\ge 0$. Then the great circle $S(v,u)$ is formed by vectors of the form
$c_1v+c_2(\sum_{i=1}^k\lambda_i u_i)$, where $c_1,c_2\in \R$ are arbitrary, with the only condition that one of them is non-zero. Remind that the points $w_1,\ldots,w_s$ lie on the sphere $S^2(v)$ with the center at $O$. For each point $w$ in $\conv_S(N(v))$ the corresponding vector on $S_r^3$ may be expressed as a combination of vector $v$ and of vectors $Ow_i$. On the other hand, for each $i=1,\ldots, k$ vector $Ow_i$ is a combination of $v$ and $u_i$. Put $w$ to be a point on $S^2(v)$ such that the corresponding vector on $S_r^3$ is $c'v + \sum_{i=1}^k\lambda_i Ow_i$. Then, if instead of $Ow_i$ we substitute a combination of $u_i$ and $v$, we obtain a point on  $S(v,u)$. Surely, this is a point of the arc $\overset{\frown}{vu}$. The property is justified.
\begin{lem}\label{lem5}
1. For $u\neq v \in V$ the sets $R(v)$ and $R(w)$ do not intersect.

2. Suppose that for some $u,v\in V$ the sets $R(v)$ and $B(u)$ intersect. Then the intersection is a single point and in this case $(u,v)\in E$. Moreover, this point is a vertex of a spherical polyhedron $R(v)$, if $\deg u\ge 2$, and is a vertex of a spherical polyhedron $R(u)$, if $\deg v\ge 2.$
\end{lem}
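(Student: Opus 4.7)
The plan is to treat both parts via a single mechanism. First I would pick $p$ in the alleged intersection and apply the projection property established just above the lemma: this supplies $w\in \conv_S(N(v))$ on the arc $\overset{\frown}{vp}$ with $\|v-w\|=1$, and a partner $w'\in \conv_S(N(u))$ lying on $\overset{\frown}{up}$ in Part~1, or on the arc $\overset{\frown}{u(-p)}$ in Part~2 (using that $B(u)$ is antipodal to $R(u)$ on $S$), with $\|u-w'\|=1$. All of $v,u,w,w',p,-p$ lie in the great $2$-sphere $T=S^3_r\cap \operatorname{span}(v,u,p)$. Since $r>1/\sqrt 2>\sqrt{3/8}$ and $\{v,u,w,w'\}\subset T$ has diameter at most $1$ by Lemma~\ref{lem4}, with $vw$ and $uw'$ edges of the induced diameter graph on $T$, Lemma~\ref{lem3} forces the arcs $\overset{\frown}{vw}$ and $\overset{\frown}{uw'}$ to cross.

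The next step rules out non-collinearity: if $v,u,p$ do not lie on a common great circle of $T$, then the great circles of $T$ through $(v,p)$ and through $(u,\pm p)$ meet only at the antipodal pair $\{p,-p\}$, yet neither sub-arc $\overset{\frown}{vw}$ nor $\overset{\frown}{uw'}$ contains $p$ or $-p$ (both lie strictly in the open hemisphere of $T$ containing $v$ and $u$); this contradicts the forced crossing. So $v,u,p$ are collinear on a great circle $C$, which I parameterize by arc length from $p$ into the hemisphere, placing $v,u$ at coordinates $\beta\ne\gamma$, with $\theta$ denoting the angular unit length $2r\sin(\theta/2)=1$.

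For Part~1, $w$ sits at $\beta-\theta$ and $w'$ at $\gamma-\theta$; without loss of generality $\beta>\gamma$, and then the angular $v$-to-$w'$ distance equals $\beta-\gamma+\theta>\theta$, so $\|v-w'\|>1$, contradicting Lemma~\ref{lem4}. For Part~2 the position of $w'$ changes to $\gamma+\theta$; the case $\beta<\gamma$ is ruled out because then $\|u-w\|>1$, and in the remaining range $\beta>\gamma$ the constraint $\|v-u\|\le 1$ forces $\beta-\gamma\le\theta$. If $\beta-\gamma<\theta$, the angular $w$-to-$w'$ distance equals $2\theta-(\beta-\gamma)>\theta$, so $\|w-w'\|>1$, contrary to Lemma~\ref{lem4}. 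Hence $\beta-\gamma=\theta$, whence $w=u$ and $w'=v$, so $\|u-v\|=1$ and $(u,v)\in E$; uniqueness of $p$ follows since $p$ is forced to be the intersection of $S$ with $S(u,v)$ on the side of $u$ opposite $v$.

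The hardest step will be the vertex claim under $\deg u\ge 2$. The subtlety is that $u\in N(v)$ need not a priori be an extremal vertex of $\conv_S(N(v))$; if it were not, then, writing $O$ for the centre of $S^2(v)$, the vector $Ou$ would be a non-trivial spherical combination $\sum\mu_i Ow_i$ of other $Ow_i$, all of common length $r\sin\theta$ and mutually distinct. I would pick a neighbour $z\in N(u)\setminus\{v\}$, which exists because $\deg u\ge 2$. Lemma~\ref{lem4} gives $\|z-w_i\|\le 1$ for each $i$, and a direct expansion using $r>1/\sqrt 2$ and $z\ne v$ verifies $\|z-O\|^2+(r\sin\theta)^2-1>0$; Lemma~\ref{lem2} then delivers $\|z-u\|<1$, contradicting $z\in N(u)$. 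Swapping the roles of $u,v$ gives the analogous vertex claim for $B(u)$ under $\deg v\ge 2$.
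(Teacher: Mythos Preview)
Your proof is correct and follows essentially the same route as the paper: pass to the great $2$-sphere through $v,u,p$, use Lemma~\ref{lem3} together with Lemma~\ref{lem4} to force the diameter arcs $\overset{\frown}{vw}$ and $\overset{\frown}{uw'}$ to meet, obtain a contradiction in the non-collinear case from the fact that two distinct great circles meet only at $\{p,-p\}$, and settle the collinear case by an ordering argument on the common great circle. Your handling of the vertex claim---picking $z\in N(u)\setminus\{v\}$ and applying Lemma~\ref{lem2} directly---is just an explicit unwinding of the second part of Lemma~\ref{lem4}, and has the virtue of making transparent why the hypothesis $\deg u\ge 2$ (needed to guarantee such a $z\ne v$) enters.
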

\begin{proof} \textbf{1.} Suppose that the sets $R(v)$ and $R(w)$ intersect at a point  $x$. Consider the arcs $\overset{\frown}{vx}$ and $\overset{\frown}{wx}.$ Suppose they do not lie on the same diametral circle. By the property discussed befor the lemma, the arcs $\overset{\frown}{vx}$ and $\overset{\frown}{wx}$ intersect $\conv_S(N(v))$ and $\conv_S(N(w))$ at points $x_v$ and $x_w$ respectively.

Consider the great two-dimensional sphere $S(x,v,w)$ in $S_r^3$. The arcs $\overset{\frown}{vx_v}$ and $\overset{\frown}{wx_w}$ do not intersect. Applying Lemma \ref{lem3} we get that the distance between some of the points $v,w,x_v,x_w$ is greater than 1. On the other hand, all these points belong to $X'$, which is of diameter 1, a contradiction. Thus the arcs lie on the same diametral circle, and $v$ and $w$ must coincide. Indeed, if not then $v,w,x_v,x_w$ are four distinct points on one half-circle, and  either $\|v-x_w\|>1$ or $\|w-x_v\|>1$.

\textbf{2.} Suppose that the sets $R(v)$ and $B(w)$ intersect at a point  $x$. If the arcs $\overset{\frown}{vx}$ and $\overset{\frown}{wx}$ do not lie on the same diametral circle, then we can apply the considerations from the previous part.

If these two arcs lie on the same diametral circle, then $v\in N(w)$ and vice versa. Indeed, if $\|v-w\|<1,$ then  $\|x_v-x_w\|>1$, where $x_v = S(v,x)\cap \conv_S(N(v))$. On the other hand, according to Lemma~\ref{lem4}, $\|x_v-x_w\|\le 1$.

The second statement of point 2 of Lemma \ref{lem5} follows easily from the second part of Lemma~\ref{lem4}. \end{proof}

We may assume that $G$ does not have vertices of degree $\le 1$.

We construct a bipartite double cover $C=(V(C),E(C))$ of $G$, which has a symmetric drawing on $S$. We choose a point $c(v)$ in the interior of $R(v)$, and the antipodal point $c'(v)$ in the interior of $B(v)$. We connect all vertices of $R(v)$ with $c(v)$ by great arcs (since all the vertices in $G$ have degree $\ge 2$, by Lemma \ref{lem5} each neighbor of $v$ corresponds to some vertex of $R(v)$). We also draw antipodal arcs from vertices of $B(v)$ to $c'(v)$. The set $V(C)$ consists of $c(v), c'(v)$, where $v\in V$; the set of edges $E(C)$ consists of all pairs $c(v), c'(w), v,w\in V$, that are joined by curves that consist of two great arcs (one in R(v), the other in B(w)) that share a point. What we described before is thus the drawing of $C$ on $S$. It is easy to see that if for any $v\in V$ we correspond $c(v),c'(v)$ to $v$, then we indeed get a double covering of $G$. Moreover, $C$ is bipartite, since we can color $c(v),v\in V,$ in red and $c'(v), v\in V$ in blue. This is a proper coloring according to Lemma \ref{lem5}.

The graph $C$ is a planar bipartite graph on $2n$ vertices, so it has at most $4n-4$ edges. Consequently, graph $G$ has at most $2n-2$ edges and the first point of Theorem \ref{th1} is proved.

For any graph $G$ such that any subgraph $H=(V(H),E(H))$ of $G$ satisfies $|E(H)|\le 2|E(H)|-2$  it is easy to show that $\chi(G)\le 4$. Indeed, assume that $n_0$ is the minimal $n$ such that there is a graph $G$ of order $n$ satisfying the above described property and such that $\chi(G)\ge 5$. $G$ contains a vertex $v$ of degree $\le 3$. By minimality of $n_0,$ $\chi(G\backslash \{x\})\le 4.$ But then we can color $v$ in the color that differs from colors of its neighbors and obtain a proper coloring of $G$ in four colors.

To prove the last point of Theorem \ref{th1} we note that each odd cycle in $G$  corresponds in the drawing of $C$ described above to a closed self-symmetric curve on the sphere without self-intersections. Any two such curves must intersect. But they can intersect only in $c(v)$ (and $c'(v)$) for some $v\in V$. That means that the corresponding odd cycles in $G$ share vertex $v$. The proof of the theorem is complete. \end{proof}

It is worth noting that in the proof the analogous statement for diameter graphs in $\R^3$ given in the paper \cite{Swan1} there is a slight inaccuracy related to the intersections of sets $R(x),B(y)$. In \cite{Swan1} Swanepoel used the following lemma, which is an analogue of point 2 of Lemma \ref{lem5}:
\begin{lem}[Lemma 2 from \cite{Swan1}] If $R(x)$ and $B(y)$ intersect, then $xy$ is a diameter and $R(x)\cap B(y) = \{y-x\}$.\end{lem}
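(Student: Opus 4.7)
The plan is to carry over part~2 of Lemma~\ref{lem5} from the spherical setting to Euclidean space. In the $\R^3$ analog, $R(x)\subset S^2$ consists of the unit vectors $w-x$ as $w$ ranges over $\conv_S(N(x))$ on the unit sphere centered at $x$, and $B(y):=-R(y)$. Any $p\in R(x)\cap B(y)$ therefore comes from $w\in\conv_S(N(x))$ and $w'\in\conv_S(N(y))$ with $p=w-x=y-w'$, so $w=x+p$ and $w'=y-p$.

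The first step is to invoke the $\R^3$ counterpart of Lemma~\ref{lem4}, established by the same strict-convexity mechanism that powers Lemma~\ref{lem2}, which asserts that the enlarged set $X':=X\cup\bigcup_{v\in X}\conv_S(N(v))$ still has diameter~$1$. This yields three bounds $\|w-y\|\le 1$, $\|w'-x\|\le 1$, and $\|w-w'\|\le 1$. Substituting and using $\|p\|=1$, the first two collapse to the same condition
\[\|y-x\|^2 \le 2\langle p,\,y-x\rangle,\]
while the third, via $w-w'=(x-y)+2p$, expands to $4\langle p,\,y-x\rangle\ge \|y-x\|^2+3$.

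The second step is to combine the latter inequality with Cauchy--Schwarz $\langle p,y-x\rangle\le\|y-x\|$ to deduce $\|y-x\|^2-4\|y-x\|+3\le 0$, whence $\|y-x\|\ge 1$; together with the diameter condition $\|y-x\|\le 1$, this forces $\|y-x\|=1$, so $xy$ is a diameter edge. The Cauchy--Schwarz bound is then saturated, which pins $p$ down as a positive unit multiple of $y-x$, giving $p=y-x$. Since the same chain applies to any candidate intersection point, we conclude $R(x)\cap B(y)=\{y-x\}$, which is the claim.

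The step I expect to be the main obstacle, and presumably the locus of the ``slight inaccuracy'' the author points out, is the $\R^3$ version of Lemma~\ref{lem4}. In the degenerate regime where $w$ is actually a vertex of $\conv_S(N(x))$ (for instance when $\deg x=1$, so $R(x)$ collapses to a single point), one can only guarantee $\|w-y\|\le 1$, not strict inequality, and the reduction to a single intersection point needs the supplementary ``vertex of $R$'' information that the author records in part~2 of Lemma~\ref{lem5}. Making this bookkeeping fully rigorous, and in particular handling degree-$1$ vertices separately or reducing at the outset to the subgraph of minimum degree $2$ as is done in the proof of Theorem~\ref{th1}, is what seems to be missing from the original argument.
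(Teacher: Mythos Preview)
This lemma is not proved in the present paper at all; it is quoted verbatim from Swanepoel~\cite{Swan1} and then discussed critically. There is therefore no ``paper's own proof'' to compare against.

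That said, your argument is correct and rather clean. The key identity $w-w'=(x-y)+2p$ together with $\|w-w'\|\le 1$ from the $\R^3$ analog of Lemma~\ref{lem4} gives $4\langle p,y-x\rangle\ge\|y-x\|^2+3$; combined with Cauchy--Schwarz this forces $(\|y-x\|-1)(\|y-x\|-3)\le 0$, hence $\|y-x\|=1$, and equality in Cauchy--Schwarz then pins down $p=y-x$. This is a genuinely different route from the one the paper takes for the spherical analog in part~2 of Lemma~\ref{lem5}, which proceeds geometrically by placing $v,w,x_v,x_w$ on a great $2$-sphere and invoking the arc-intersection statement of Lemma~\ref{lem3}. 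Your inner-product computation is more direct and bypasses that reduction; the paper's geometric approach, in exchange, handles part~1 of Lemma~\ref{lem5} (disjointness of $R(v)$ and $R(w)$) with the same machinery.

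Your closing paragraph correctly locates the ``slight inaccuracy'': the cited lemma only identifies the intersection \emph{point} $y-x$ but says nothing about that point being a \emph{vertex} of the spherical polygon $R(x)$ (respectively $B(y)$), and without this the drawing of the double cover can acquire crossings. The paper repairs this in Lemma~\ref{lem5} by appending the vertex clause (derived from the strict inequality in Lemma~\ref{lem4}) and by first pruning vertices of degree at most~$1$, exactly as you anticipate.
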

Then  Swanepoel constructed a bipartite double cover using the same considerations as above. However, this lemma is not enough to construct a bipartite double cover which is a planar graph, so the final conclusion from \cite{Swan1}, ``By Lemmas 1 and 2 no edges cross, and the theorem follows,'' is wrong. The important thing missing is that, after deleting all the vertices of degree 1, each point  in $R(x), B(x)$ that correspond to diameters in the graph must be a vertex of the spherical polygon $R(x),B(x)$. The problem is that Lemma 2 does not exclude the following configuration: $R(x)$ and $B(y)$ are arcs $\overset{\frown}{u_xv_x}$ and $\overset{\frown}{u_yv_y}$ that intersect at the interior point $z = y-x.$ This correspond to the situation when $x$ is connected by an edge to $y, x+u_x,x+v_x$ (see \cite{Swan1}), and $y$ is connected to $x, u_y, v_y$. The conditions of Lemma 2 from \cite{Swan1} are satisfied in this situation, but if one tries to construct a drawing of $C$ as described above, he ends up with a drawing that has self-intersections.

Fortunately, this configuration is impossible to get in $\R^3$, since the statement, analogous to the second part of the point 2 of Lemma \ref{lem5} holds for diameter graphs in $\R^3$ (and it is in fact easy to deduce from Lemma 3 from \cite{Swan1}).

Nevertheless, if we consider the sphere $S^3_r$ with $r=1/\sqrt 2$, then we indeed can get the configuration described above, if we try to carry out the proof of Theorem \ref{th1} in this case. The graph $G$ we need to consider is a complete bipartite graph on $2n$ vertices with equal part sizes. It has a standard realization on $S^3_r$, with two parts placed on two orthogonal diametral circles. The statement of the theorem indeed does not hold for such a graph since it has $n^2$ edges. Besides, this example show that the bound on $r$ in Theorem \ref{th1} is sharp.

\section{Diameter graphs in $\R^4$}\label{sec3}

As we already mentioned,
Brass \cite{Br} and Van Wamelen \cite{Wam} determined $U_4(2,n)$:
\begin{thm}\label{thbr} For $n\ge 5,$
   $$U_4(2,n)= \begin{cases}\lfloor n^2/4\rfloor +n,\ \ \ \ \ \ \text{ if $n$ is divisible by 8 or 10}, \\
                    \lfloor n^2/4\rfloor +n-1, \ \text{ otherwise. } \end{cases}$$
\end{thm}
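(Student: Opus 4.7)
The plan is a two-sided approach: an explicit construction for the lower bound and a stability-based upper bound. For the lower bound, the workhorse is the Lenz construction: fix an orthogonal decomposition $\R^4 = P_1 \oplus P_2$ and place $a = \lceil n/2\rceil$ points on the circle $C_1 \subset P_1$ of radius $1/\sqrt{2}$ and $b = \lfloor n/2\rfloor$ points on $C_2 \subset P_2$ of the same radius. Every cross pair then has distance $1$, contributing $ab = \lfloor n^2/4\rfloor$ edges. On each circle, unit distance corresponds to angular separation $\pi/2$, so the intra-circle graph sits inside the quarter-turn $\mathbb{Z}/4$ orbit graph; a direct analysis gives maximum edge count $f(k) = k$ if $4 \mid k$ and $f(k) = k-1$ otherwise. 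This realizes $\lfloor n^2/4\rfloor + n$ whenever $8 \mid n$. For $n \equiv 0 \pmod{10}$ I would use a sporadic construction based on two regular pentagons in complementary $2$-planes, with radii and rotational offset tuned so that sufficiently many intra-pentagon and cross chords become unit; the remaining residues are handled by augmenting the Lenz packing with a carefully placed extra vertex or by adapting the regular $4$-simplex, which alone realizes $\binom{5}{2} = 10$ unit distances at $n = 5$.

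For the upper bound, the essential step is a stability lemma: any unit-distance graph on $n \ge 5$ vertices in $\R^4$ with at least $\lfloor n^2/4\rfloor + n - 1$ edges has its vertex set on the union of two $1/\sqrt{2}$-circles in complementary $2$-planes. I would prove this by a peeling argument: if some vertex $v$ has degree below $\lfloor n/2\rfloor + C$ for a small absolute constant $C$, delete it and invoke induction; otherwise $N(v)$ lives on the unit $3$-sphere around $v$, and unit-distance pairs inside $N(v)$ correspond to spherical chords of a fixed length. Combining Erd\H os's asymptotic $U_4(2,n) = n^2/4 + \bar o(n^2)$ with averaging over $v$ then forces $N(v)$ to concentrate on a great circle of that sphere, and propagating this constraint across all high-degree vertices locks in the global Lenz structure. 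Once stability is in hand, the exact value of $U_4(2,n)$ reduces to the combinatorial optimization $\max_{a + b = n}\bigl(ab + f(a) + f(b)\bigr)$, compared against the sporadic simplex/pentagonal constructions for the exceptional residues.

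The main obstacle I anticipate is the stability lemma itself, in particular ruling out hybrid configurations in which a small number of vertices lies off the two Lenz circles. A careful accounting of how many unit edges such ``outlier'' vertices can accumulate, using both the global bound $U_4(2,m) \le m^2/4 + O(m)$ and its analogue on a $3$-sphere, is where the bulk of the case analysis lives; it is also where the hypothesis $n \ge 5$ becomes essential in order to exclude small exotic competitors such as the regular $4$-simplex and the $16$-cell.
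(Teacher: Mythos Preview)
The paper does not prove this theorem. Theorem~\ref{thbr} is quoted as a result of Brass~\cite{Br} and van Wamelen~\cite{Wam}, and only the corollary $U_4(2,n)\le n^2/4+n$ is used later (in Section~\ref{sec31} and in Lemma~\ref{lemtrbor}). There is therefore no in-paper argument to compare your proposal against.

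On the proposal itself, there is a concrete gap. Your stability lemma asserts that any near-extremal configuration lies on two orthogonal circles of \emph{equal} radius $1/\sqrt 2$, and your final optimization $\max_{a+b=n}\bigl(ab+f(a)+f(b)\bigr)$ is computed with the intra-circle count $f(k)$ specific to that radius. Both fail already at $n=10$. The extremal $10$-point configuration is a Lenz configuration with \emph{unequal} radii $r_1^2=(5-\sqrt 5)/10$, $r_2^2=(5+\sqrt 5)/10$: a regular pentagon on $C_1$ contributes $5$ unit diagonals, a regular pentagon on $C_2$ contributes $5$ unit sides, and the $25$ cross pairs give $35=\lfloor 10^2/4\rfloor+10$. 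With $r_1=r_2=1/\sqrt 2$ one cannot exceed $33$. So the stability statement you intend to prove is false as written, and the ``sporadic pentagonal'' construction you allude to is not an alternative to Lenz but rather Lenz with a different choice of radii; the upper-bound argument must accommodate all $r_1^2+r_2^2=1$ and the corresponding intra-circle extremal counts, which is exactly where the delicate case analysis in \cite{Br,Wam} lives.
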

Thus, we have $U_4(2,n)\le n^2/4+n$ for any $n\ge 1$.
In \cite{Swan} Swanepoel established the maximum number of edges in a diameter graph in $\R^4,$ if $n$ is sufficiently large:
\begin{thm}\label{thSwan} For all sufficiently large $n$,
$D_4(2,n) = F_2(n)$, where $$F_2(n) = \begin{cases}t_2(n)+\lceil n/2\rceil +1, \text{ if } n\not\equiv 3\ \mathrm{mod}\ 4, \\
                    t_2(n)+\lceil n/2\rceil,\ \ \ \ \ \text{ if } n\equiv 3\ \mathrm{mod}\ 4, \end{cases}$$
where $t_2(n) = \lfloor n/2\rfloor\lceil n/2\rceil$ is the number of edges in a complete bipartite graph on $n$ vertices with almost equal part sizes.
\end{thm}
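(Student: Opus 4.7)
The plan is to establish matching upper and lower bounds.

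For the lower bound, I would exhibit a Lenz-type construction realising $F_2(n)$: take two mutually orthogonal $2$-planes $P_1, P_2\subset \R^4$ meeting at the origin, and circles $C_i\subset P_i$ centred at the origin with $r_1^2+r_2^2=1$, so that every pair $(p,q)$ with $p\in C_1$, $q\in C_2$ satisfies $\|p-q\|=1$. Distribute $\lfloor n/2\rfloor$ points on a short arc of $C_1$ and $\lceil n/2\rceil$ on a short arc of $C_2$ so that intra-circle distances stay at most $1$; this already produces $t_2(n)$ diameter edges across the two arcs. Additional intra-arc diameter pairs (of chord length exactly $1$ at the ends of the arcs, possibly supplemented by small modifications of the circle radii or by a few off-circle points), together with a careful parity analysis modulo $4$, account for the extra $\lceil n/2\rceil$ or $\lceil n/2\rceil+1$ edges of $F_2(n)$.

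For the upper bound, I would start from Theorem~\ref{thbr}, which bounds the edges of any diameter graph on $n$ vertices in $\R^4$ by $U_4(2,n)\le n^2/4+n$, and then refine via a structural/stability argument. The key claim is that a diameter graph $G(X)$ with ``too many'' edges forces almost all of $X$ onto two orthogonal circles $C_1, C_2$ of combined squared radius $1$ (a near-Lenz configuration), giving a partition $X = X_1\sqcup X_2\sqcup X_0$ with $X_i\subset C_i$ and $X_0$ small. Cross-edges between $X_1$ and $X_2$ then contribute at most $|X_1|\cdot|X_2|\le t_2(n)$; each $X_i$ is restricted to a short arc (to respect the unit diameter), and a Hopf--Pannwitz-type argument caps the number of intra-$X_i$ diameter edges by roughly $\lceil |X_i|/2\rceil$; the contributions of exceptional vertices in $X_0$ are bounded using auxiliary tools such as the Mori\'c--Pach bound (Theorem~\ref{thPM}) applied to triangles through $X_0$. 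Summing and invoking the parity of $n$ modulo $4$ recovers the exact formula $F_2(n)$.

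The main obstacle is the stability step: one must rigorously argue that a near-extremal diameter graph in $\R^4$ is essentially a Lenz configuration on two orthogonal circles, ruling out competing candidates such as three or more circles in mutually orthogonal $2$-planes, or thicker subsets of $3$-spheres, or configurations mixing a Lenz ``base'' with an auxiliary planar set. Achieving enough quantitative control to yield an explicit threshold like $n\ge 52$ (the improvement this paper promises) requires careful bookkeeping: any gain in edges from a non-Lenz configuration must be shown to be dominated by losses elsewhere, and the ``exceptional set'' $X_0$ must be shown unable to smuggle in extra edges without violating the diameter constraint or the unit-distance ceiling supplied by Theorem~\ref{thbr}.
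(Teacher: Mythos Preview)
Your outline has the right architecture---a Lenz construction for the lower bound and a ``reduce to Lenz'' argument for the upper bound---but the central step is left open, and several of the supporting details are off.

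The decisive gap is the stability step. You yourself flag it as the main obstacle and then do not propose a mechanism. The paper's argument is concrete and quite different from a generic stability theorem: apply a quantitative K\H ov\'ari--S\'os--Tur\'an bound (Lemma~\ref{lemkst}) to any diameter graph with $|E|\ge n^2/4$ to extract a $K_{7,3}$ once $n\ge 52$; the three common neighbours force the seven vertices onto a circle, so geometrically a $K_{7,3}$ already pins down a Lenz pair $(C_1,C_2)$. One then takes a \emph{maximal} $V_1$ carrying a spanning $K_{l,m}$ with $l\ge 7$, $m\ge 3$; maximality forces every vertex outside $V_1$ to have at most four neighbours in $V_1$ (five would make it cocircular with one part). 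Writing $x=|V_1|$ and using $D_4(2,n-x)\le U_4(2,n-x)\le (n-x)^2/4+(n-x)$ on the leftover vertices, a short inequality shows $F_2(x)+4(n-x)+(n-x)^2/4+(n-x)<F_2(n)$ unless $x=n$. No exceptional set $X_0$ survives; there is nothing to ``handle with Mori\'c--Pach''. Invoking Theorem~\ref{thPM} here is in any case inappropriate: that result bounds the number of $d$-cliques under a pairwise-intersection hypothesis and says nothing about edge counts from stray vertices.

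Two smaller issues. First, your intra-circle bound ``$\lceil |X_i|/2\rceil$ via Hopf--Pannwitz'' is wrong: Hopf--Pannwitz gives $|X_i|$, and the correct circle lemma (Lemma~\ref{lemsdiam}) gives at most $|X_i|$ or $|X_i|-1$ on an arbitrary circle, but at most one diameter on a circle of radius $>1/\sqrt 3$; the extra $\lceil n/2\rceil$ (or $\lceil n/2\rceil+1$) in $F_2(n)$ comes from putting $\lceil n/2\rceil$ points on the \emph{smaller} circle in a Reuleaux-type odd pattern plus at most one diameter on the larger circle, not from ``short arcs''. Second, your lower-bound description (``points on a short arc so intra-arc distances stay $\le 1$'') produces \emph{no} intra-circle diameters at all and cannot reach $F_2(n)$; you need one circle of radius $\le 1/\sqrt 3$ carrying many unit chords.
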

In this section we prove the following theorem:

\begin{thm}\label{th2}

\begin{enumerate}
\item  The statement of Theorem \ref{thSwan} holds for $n\ge 52$.
\item  For all sufficiently large $n$ we have
$D_4(3,n) = F_3(n)$, where $$F_3(n)= \begin{cases}(n-1)^2/4+ n, \ \ \ \ \ \text{ if } n\equiv 1\ \mathrm{mod}\ 4, \\
                    (n-1)^2/4+n-1, \text{ if } n\equiv 3\ \mathrm{mod}\ 4, \\
                    n(n-2)/4+ n, \ \ \ \ \ \! \text{ if } n\equiv 0\ \mathrm{mod}\ 2. \end{cases}$$
\item(Schur's conjecture in $\R^4$)  For all $n\ge 5$ we have $D_4(4,n) = n$.
\end{enumerate}
\end{thm}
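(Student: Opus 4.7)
The three parts of Theorem \ref{th2} rest on distinct ideas, so I would prove them in order, saving Part 3 (Schur's conjecture) for last, since it is the most novel.

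For Part 1, I would retrace Swanepoel's proof of Theorem \ref{thSwan}, tracking every implicit constant. His ``for sufficiently large $n$'' hypothesis absorbs lower-order terms coming from the explicit bound $U_4(2,n)\le n^2/4+n$ of Theorem \ref{thbr} and from his stability lemma for near-extremal diameter graphs. By inserting numerical estimates at each step and carefully balancing inequalities, one obtains the explicit threshold $n\ge 52$. The obstacle here is careful bookkeeping, not any new idea.

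For Part 2, the lower bound $D_4(3,n)\ge F_3(n)$ is realized by a Lenz-type configuration in $\R^4$: two orthogonal great circles of radius $1/\sqrt 2$, each equipped with a chord-length-$1$ pair, together with an appropriate distribution of the $n$ vertices; the count $F_3(n)$ follows from a routine optimization over partition sizes for each residue $n\bmod 4$. For the matching upper bound, I would combine Part 1 with a stability step showing that any diameter graph in $\R^4$ with close to $F_3(n)$ triangles is essentially contained in a Lenz configuration, after which exact counting suffices. The delicate step is the triangle-stability argument, which is strictly stronger than Swanepoel's edge-stability.

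For Part 3, my plan is to apply Theorem \ref{thPM} (Mori\'c--Pach) after verifying its hypothesis in $\R^4$: any two 4-cliques in a diameter graph share at least two vertices. A 4-clique is a regular unit tetrahedron whose vertices lie on a 2-sphere of radius $\sqrt{3/8}$ around its centroid inside the 3-flat they span. Suppose for contradiction that two such tetrahedra $T_1,T_2$ with centroids $c_1,c_2$ satisfy $|T_1\cap T_2|\le 1$ while all cross-distances are at most $1$. If $|T_1\cap T_2|=1$ with common vertex $v$, then the remaining six vertices lie on the unit 3-sphere centered at $v$; since $1>1/\sqrt 2$, Theorem \ref{th1}(3) applies and forces the two odd 3-cycles $T_1\setminus\{v\}$ and $T_2\setminus\{v\}$ to share a vertex, contradicting $|T_1\cap T_2|=1$. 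If $T_1\cap T_2=\emptyset$, summing $\|a_i-b_j\|^2\le 1$ over all $16$ cross-pairs and expanding around the centroids yields the identity $\sum_{i,j}\|a_i-b_j\|^2=16\|c_1-c_2\|^2+12\le 16$, hence $\|c_1-c_2\|\le 1/2$; combining this with the rigidity of two disjoint regular unit tetrahedra in $\R^4$, whose affine 3-flats must intersect in at least a 2-flat, should then yield a contradiction. The main obstacle is precisely this rigidity argument in the disjoint case. Once the intersection property is in place, Theorem \ref{thPM} gives $D_4(4,n)\le n$, and equality for $n\ge 5$ is witnessed by the regular 4-simplex augmented by $n-5$ extra vertices placed on the arc of points equidistant from a fixed triangular face (joining the two remaining simplex vertices), each contributing exactly one additional 4-clique.
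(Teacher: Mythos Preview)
Your handling of the case $|T_1\cap T_2|=1$ in Part~3 is correct and is exactly the content of the paper's Theorem~\ref{th3}. However, the rest of your plan diverges from the paper in two places where there are genuine gaps.

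\textbf{Part 1.} Your strategy of making Swanepoel's stability proof effective is precisely what the paper's Remark after Theorem~\ref{th2} says does \emph{not} work: that proof goes through the Erd\H os--Simonovits stability theorem, from which no reasonable explicit threshold can be extracted. The paper instead bypasses stability altogether. It proves a quantitative K\H ov\'ari--S\'os--Tur\'an lemma (Lemma~\ref{lemkst}) showing that any $n$-vertex graph with $n\ge 52$ and at least $n^2/4$ edges contains a $K_{7,3}$; then it takes a maximal $V_1$ spanning a $K_{l,m}$ with $l\ge 7$, $m\ge 3$, observes that every vertex outside $V_1$ has at most four neighbours in $V_1$, and finishes with a short edge count using $D_4(2,m)\le U_4(2,m)\le m^2/4+m$. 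The constant $52$ comes solely from the K\H ov\'ari--S\'os--Tur\'an inequality, not from any tracking of Swanepoel's argument.

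\textbf{Part 3, the disjoint case.} Your centroid identity $\sum_{i,j}\|a_i-b_j\|^2=16\|c_1-c_2\|^2+12$ is correct, but the ``rigidity'' step you flag as the main obstacle is not merely unfinished---it is aimed at a statement the paper never claims and does not need. The paper explicitly allows disjoint $4$-cliques: after Theorem~\ref{th3} it states that any two $4$-cliques share either at least two vertices \emph{or none}. Rather than ruling out the disjoint case, the paper partitions $V$ into blocks $V_1,\ldots,V_k$ so that every $4$-clique lies in a single block and within each block all $4$-cliques pairwise meet in $\ge 2$ vertices; Theorem~\ref{thPM} is then applied block by block. The real work is proving that ``sharing $\ge 2$ vertices'' is transitive, i.e.\ ruling out the configuration $|K^1\cap K^2|=|K^1\cap K^3|=2$, $|K^2\cap K^3|=0$. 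This is done by a delicate geometric argument on the two-sphere $S_1$ of radius $\sqrt{3}/2$ orthogonal to a shared edge of $K^1$: one analyses how the unit balls centred at the extra vertices $w_1,w_2$ of $K^2$ intersect $S_1$ in circles through $v_1,v_2$, and shows (via two cases on which arcs bound $S_1\cap B^w_1\cap B^w_2$) that no room remains for the pair $w_3,w_4$ of $K^3$ at unit distance. Your plan misses this transitivity step entirely and instead attempts to exclude disjoint cliques, which is both unproven and unnecessary.
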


\textbf{Remark.\ } It seems hard to derive any reasonable bound on $n$ from the proof of Theorem \ref{thSwan} by Swanepoel. It is due to the fact that the proof relies on the stability theorem due to Erd\H os and Simonovits (\cite{Bol}, Theorem 4.2, Section 6).\\

To prove the third part of Theorem \ref{th2} we will need the following theorem, which is derived easily from Theorems \ref{thPM} and \ref{th1}:

\begin{thm}\label{th3} Two $d$-cliques in a diameter graph $G$ in $\R^d$ cannot share exactly $d-3$ vertices. In particular, if any two $d$-cliques in $G$ share at least $d-3$ vertices, then the number of $d$-cliques in $G$ is at most the number of vertices of $G$.
\end{thm}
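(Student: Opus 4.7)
The plan is to reduce the geometric configuration of the two intersecting $d$-cliques to a diameter graph on $S^3_r$ with $r>1/\sqrt 2$, and then apply part 3 of Theorem \ref{th1}. Suppose for contradiction that $A$ and $B$ are two $d$-cliques in $G$ sharing exactly $d-3$ vertices; set $C=A\cap B$ and write $A\setminus B=\{a_1,a_2,a_3\}$, $B\setminus A=\{b_1,b_2,b_3\}$. Every $a_i$ and every $b_j$ is at unit distance from every point of $C$, and the $a_i$ are pairwise at unit distance, as are the $b_j$.

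The $d-3$ points of $C$ form a regular $(d-4)$-simplex of side $1$, so they span a $(d-4)$-dimensional affine subspace $L$. Hence the locus of points in $\R^d$ at distance $1$ from every vertex of $C$ is a sphere lying in the $4$-dimensional affine subspace orthogonal to $L$ and passing through the centroid $O_C$ of $C$. Using the circumradius $\sqrt{(d-4)/(2(d-3))}$ of a regular $(d-4)$-simplex of side $1$, a Pythagoras computation yields the radius of that sphere:
$$r=\sqrt{\frac{d-2}{2(d-3)}},$$
which strictly exceeds $1/\sqrt 2$ for every $d\ge 4$, since $(d-2)/(d-3)>1$. Therefore the six points $a_1,a_2,a_3,b_1,b_2,b_3$ lie on a three-dimensional sphere of radius $r>1/\sqrt 2$, and, being a subset of the diameter-$1$ set $X$, have Euclidean diameter exactly $1$.

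Part 3 of Theorem \ref{th1}, applied to the diameter graph on this six-point subset of $S^3_r$, asserts that any two odd cycles share a vertex. But $\{a_1,a_2,a_3\}$ and $\{b_1,b_2,b_3\}$ are two vertex-disjoint triangles (hence odd cycles) in this diameter graph, a contradiction. This establishes the first assertion.

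The second assertion follows immediately: if any two $d$-cliques in $G$ share at least $d-3$ vertices, then the first assertion rules out intersections of exactly $d-3$ vertices, so in fact any two $d$-cliques share at least $d-2$ vertices, and Theorem \ref{thPM} gives the bound $n$ on the number of $d$-cliques. The only delicate step in the whole argument is the identification of the $3$-sphere and the verification that its radius is strictly greater than $1/\sqrt 2$ for all $d\ge 4$; the inequality holds by a single line of arithmetic, and everything else is a direct appeal to the results already in hand.
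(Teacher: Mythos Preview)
Your proof is correct and follows essentially the same route as the paper's: locate the six remaining vertices on a $3$-sphere orthogonal to the affine span of the $d-3$ shared vertices, check that its radius $r=\sqrt{(d-2)/(2(d-3))}=\sqrt{1-\frac{d-4}{2d-6}}$ exceeds $1/\sqrt 2$, invoke part~3 of Theorem~\ref{th1} to force the two triangles to meet, and then appeal to Theorem~\ref{thPM}. The only difference is that you spell out the circumradius computation in slightly more detail than the paper does.
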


\begin{proof} Consider two $d$-cliques $K_1, K_2$ in $G$ that share $d-3$ vertices $v_1,\ldots, v_{d-3}$. The vertices $w_1,w_2,w_3\in K_1$ and $u_1,u_2,u_3 \in K_2$ that are different from $v_1,\ldots,v_{d-3}$ lie on a 3-dimensional sphere $S_r^3$ of radius $r = \sqrt{1-\frac{d-4}{2d-6}}>1/\sqrt 2$. Thus, we can apply part 3 of Theorem \ref{th1} to the points of $G$ that lie on $S_r^3$ and obtain that any two triangles on $S_r^3$ must share a vertex. So, some of the vertices of the triangles $u_1u_2u_3, w_1w_2w_3$ must coincide. We obtain that $K_1,K_2$ must share at least $d-2$ vertices. To finish the proof we apply Theorem \ref{thPM}.
\end{proof}

In Subsections \ref{sec31}, \ref{sec32}, \ref{sec33} we prove the first, the second  and the third part of Theorem \ref{th2} respectively.
\subsection{Number of edges}\label{sec31}
The configuration that gives the lower bound in Theorem \ref{thSwan} is called a \textit{Lenz configuration} (see \cite{Swan}).
Consider two circles $C_1$ and $C_2$ with a common center of radius $r_1$ and $r_2,$ respectively. Suppose that the circles lie in two orthogonal planes and that $r_1^2+r_2^2 =1$. A finite set $S$ is a \textit{Lenz configuration}, if  $S\subset C_1\cup C_2$ for some circles $C_1,C_2$ that satisfy the above described conditions.

Note that if a diameter graph in $\R^4$ contains a complete bipartite graph with at least three vertices in each part as a spanning subgraph, then its vertices form a Lenz configuration.

 Thus, we need to prove only the upper bound. As in \cite{Swan}, we prove that, indeed, the maximum  number of edges is attained only on the  Lenz configurations.

We will need the lemma which is a version of the famous K\H ov\'ari-S\'os-Tur\'an theorem \cite{KST}:
\begin{lem}\label{lemkst} Let $s,n \in \N, 0<c<1/2.$ If $G=(V,E)$ is a graph on $n$ vertices, $e=|E|\ge cn^2$, and if $2cn(2cn-1)(2cn-2)>(s-1)(n-1)(n-2)$, then $G$ contains a copy of $K_{s,3}$ as a subgraph.\end{lem}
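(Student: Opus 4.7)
The plan is to apply the standard double-counting argument of K\H ov\'ari, S\'os, and Tur\'an specialized to copies of $K_{s,3}$. The key quantity I would track is $\sum_{v \in V} \binom{\deg(v)}{3}$, which, by switching the order of summation, equals $\sum_T |\bigcap_{u \in T}N(u)|$, where $T$ ranges over unordered triples of vertices; equivalently, it counts pairs consisting of a triple $T$ and a vertex adjacent to every member of $T$.

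If $G$ contained no $K_{s,3}$, then every triple $T$ would have at most $s-1$ common neighbors, giving the upper bound $\sum_v \binom{\deg(v)}{3} \le (s-1)\binom{n}{3} = (s-1)n(n-1)(n-2)/6$. For the matching lower bound I would apply Jensen's inequality to the polynomial $f(x)=x(x-1)(x-2)/6$, which is convex for $x \ge 1$ since $f''(x)=x-1 \ge 0$ there: because $\sum_v \deg(v) = 2e \ge 2cn^2$, the average degree is at least $2cn$, so $\sum_v \binom{\deg(v)}{3} \ge n\cdot f(2cn) = n\cdot 2cn(2cn-1)(2cn-2)/6$. Combining the two bounds with the hypothesis $2cn(2cn-1)(2cn-2) > (s-1)(n-1)(n-2)$ yields a contradiction, so $G$ must contain a copy of $K_{s,3}$.

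There is no serious obstacle here; the only point worth a sentence of care is the appeal to Jensen, which requires $f$ to be convex on the range of values attained by the degrees. One clean way to handle this is to throw away all vertices of degree $\le 2$ at the start, since they contribute $0$ to the left-hand side; the remaining graph still has at least $cn^2$ edges (up to a negligible loss that can be absorbed into the hypothesis for $n$ large), and its minimum degree is $\ge 3$, putting us squarely in the region where $f$ is convex. Alternatively, since $c>0$ is fixed, the hypothesis $2cn(2cn-1)(2cn-2)>(s-1)(n-1)(n-2)$ forces $2cn$ to be at least of the order of $n^{1/3}(s-1)^{1/3}$, so $2cn \ge 2$ is automatic and the direct application of Jensen goes through without modification.
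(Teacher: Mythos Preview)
Your proof is correct and follows essentially the same approach as the paper: double-count $\sum_v \binom{\deg(v)}{3}$, compare it via pigeonhole to $(s-1)\binom{n}{3}$, and bound it below by Jensen at the average degree $2e/n\ge 2cn$. The paper is in fact less careful than you are about the convexity issue --- it simply asserts that the sum is minimized when all $d_i$ are equal --- so your extra paragraph only strengthens the argument.
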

\begin{proof}
Suppose $V = \{v_1,\ldots, v_n\}$ and $d_i$ is the degree of $v_i$. If
\begin{equation}\label{kst} \sum_{i=1}^n{d_i\choose 3}> (s-1){n\choose 3},\end{equation}
then, by the pigeonhole principle, some $s$ vertices from $V$ have three common neighbors. These $s$ vertices together with their three common neighbors form a copy of $K_{s,3}.$ Applying Jensen's inequality, one can check that the left-hand side is minimized when all $d_i$ are equal, so (\ref{kst}) follows from the inequality:
\[
\begin{aligned}&n\frac {2e}n\left(\frac {2e}n-1\right)\left(\frac {2e}n-2\right)>(s-1)n(n-1)(n-2) \ \Leftrightarrow\\
& 2cn(2cn-1)(2cn-2)>(s-1)(n-1)(n-2).\end{aligned}
\]\end{proof}

From Lemma \ref{lemkst} we obtain the following corollary:
\begin{cor}\label{corhr}
If $G=(V,E)$ is a graph on $n\ge 52$ vertices, $e=|E|\ge n^2/4$, then $G$ contains a copy of $K_{7,3}$ as a subgraph.
\end{cor}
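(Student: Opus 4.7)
The plan is to obtain the corollary as a direct application of Lemma \ref{lemkst} with parameters $c = 1/4$ and $s = 7$. The hypothesis $e \ge n^2/4$ matches $e \ge c n^2$, so I only need to verify that the numerical condition
\[
2cn(2cn-1)(2cn-2) > (s-1)(n-1)(n-2)
\]
holds for every $n \ge 52$. Substituting the chosen values, this reads
\[
\tfrac{n}{2}\Bigl(\tfrac{n}{2}-1\Bigr)\Bigl(\tfrac{n}{2}-2\Bigr) > 6(n-1)(n-2),
\]
or equivalently, after multiplying by $8$ and using $\tfrac{n}{2}-1 = \tfrac{n-2}{2}$ and $\tfrac{n}{2}-2 = \tfrac{n-4}{2}$,
\[
n(n-2)(n-4) > 48(n-1)(n-2).
\]

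Since $n - 2 > 0$, I can cancel the factor $n-2$ to reduce this to $n(n-4) > 48(n-1)$, i.e., $n^2 - 52n + 48 > 0$. The quadratic $n^2 - 52n + 48$ has roots $n = 26 \pm \sqrt{628}$, so the larger root is $26 + \sqrt{628} < 26 + 25.1 < 52$. Therefore the inequality holds for every integer $n \ge 52$, which establishes the corollary. There is no real obstacle here: the only thing to check is the arithmetic threshold, and the value $52$ in the corollary is chosen precisely so that the inequality just barely holds. In the write-up I would present the chain of equivalent inequalities above and conclude by noting that the larger root of $n^2 - 52n + 48$ is less than $52$.
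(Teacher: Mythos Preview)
Your proof is correct and is exactly what the paper intends: the corollary is stated immediately after Lemma~\ref{lemkst} as a direct consequence, and the paper does not spell out the arithmetic. Your verification that $n^2 - 52n + 48 > 0$ for $n\ge 52$ (with equality case $n=52$ giving $48>0$) is precisely the computation needed.
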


Let $G$ be a graph of diameters in $\R^4$ on $n$ vertices with $D_4(2,n)\ge F_2(n)$ edges. Since $n\ge 52$, from Corollary \ref{corhr} we obtain that $G$ contains a copy of $K_{7,3}$. Suppose the set $V_1\subset V$ is a maximal subset such that $G[V_1]$ contains $K_{l,m}, l\ge 7 ,m\ge 3,$ as a spanning subgraph.

The number of edges between $V_1$ and $V\backslash V_1$ is at most $4 (|V|-|V_1|)$. Indeed, if some vertex $v$ from $V\backslash V_1$ is connected to five vertices in $V_1$, then it is connected to at least three vertices from one part of $K_{l,m}$ and it must be cocircular with the vertices of the other part. Thus we can add $v$ to the bipartite graph and obtain a contradiction with the maximality of $V_1$.

Denote $x=|V_1|\ge 10.$ We obtain the following inequality on $D_4(2,n)$:

$$D_4(2,n)\le F_2(x)+4(n-x)+|E(G[V\backslash V_1])|\le F_2(x)+4(n-x)+ (n-x)^2/4+(n-x),$$
where the last inequality follows from the fact that $D_4(2,n)\le U_4(2,n)\le n^2/4+n.$ We use that $n^2/4+n/2\le F_2(n)\le n^2/4+n/2+5/4$:
$$\begin{aligned} D_4(2,n)\le &x^2/4+x/2+5/4+5(n-x)+ (n-x)^2/4 = n^2/4+n/2-\\- &x(n-x)/2
+5/4+9(n-x)/2\le F_2(n)-(x-9)(n-x)/2+5/4.\end{aligned}$$
Thus, if $n-x\ge 3,$ then by the inequality above the graph $G$ cannot have the maximum number of edges. If $n-x=1$ or $2$, then we can use the improved bound $|E(G[V\backslash V_1])|\le (n-x)^2/4+(n-x)-5/4$ and obtain that $G$ cannot have the maximum number of edges in this case either. Thus, $n-x=0$ and the vertices of the graph $G$ form a Lenz configuration. The first part of Theorem \ref{th2} is proved.
\subsection{Number of triangles}\label{sec32}
First we show that there is a Lenz configuration on $n$ vertices with $F_3(n)$ triangles and that it is indeed the maximum number of triangles among $n$-vertex Lenz configurations.
The following lemma was stated in \cite{Swan}:
\begin{lem}\label{lemsdiam}
Let  $S$ be an $n$-vertex subset of the circle, $G=(S,E)$ is the diameter graph of $S$.
\begin{enumerate}\item If the radius of the circle $>1/\sqrt 3$, then we have $|E|\le 1$.
\item $|E|\le \begin{cases}n,\ \ \ \ \ \ \text{ if $n$ is odd}, \\
                    n-1, \ \text{if $n$ is even. } \end{cases}$\end{enumerate}
\end{lem}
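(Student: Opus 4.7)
The plan is to study both parts of the lemma through the central angle $\theta := 2\arcsin(1/(2r))$ corresponding to chords of length $1$: the diameter-$1$ condition on $S$ becomes the requirement that every two points of $S$ have shorter arc at most $\theta$, with edges of $G$ corresponding exactly to pairs at shorter arc equal to $\theta$.

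For part (1), the assumption $r>1/\sqrt 3$ gives $\theta<2\pi/3$. The first step is to show that $S$ must lie in some closed arc of length at most $\theta$. If not, the convex hull of $S$ in the plane contains the centre of the circle, which forces the existence of three points of $S$ whose triangle surrounds the centre; those three points split the circle into three arcs of total length $2\pi$, each at most $\pi$, so one of them has length at least $2\pi/3>\theta$, producing a pair in $S$ at chord distance $\ge 2r\sin(\pi/3)=r\sqrt 3>1$ and contradicting the diameter bound. Once $S$ sits inside an arc of length at most $\theta$, only the pair of endpoints of that arc can achieve shorter arc exactly $\theta$, so $|E|\le 1$.

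For part (2), the key observation is that each vertex of $G$ has at most two diameter partners (the two points on the circle at arc $\theta$ from it), so $G$ has maximum degree $2$ and is a disjoint union of paths and cycles. Writing $c$ for the number of path components, this already yields $|E|=n-c\le n$, which handles the case of odd $n$. The even case amounts to ruling out $c=0$, i.e., the situation where $G$ is a disjoint union of cycles covering all $n$ vertices. The plan is to prove two things: (a)~every cycle of $G$ has odd length, and (b)~$G$ contains at most one cycle. Granted both, the case $c=0$ would force $n$ to equal the length of the unique cycle and hence to be odd, contradicting $n$ even.

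Sub-claim~(a) follows by parametrisation: walking along a simple cycle of length $k\ge 3$ forces consistent $\theta$-steps, so the vertices are $k$ equally spaced points with $\theta=2\pi m/k$ and $\gcd(m,k)=1$; requiring the longest inside-cycle chord (which corresponds to shorter arc $2\pi\lfloor k/2\rfloor/k$) to have length at most $1$ then forces $m=\lfloor k/2\rfloor$, which is coprime to $k$ only when $k$ is odd. The main obstacle is sub-claim~(b). Two disjoint cycles would share the same odd length $k=2l+1$ and step $\theta=2\pi l/k$, so they would be rotations of the same regular $k$-gon by some angle $\phi\in(0,2\pi/k)$. I would then consider the pair consisting of the vertex $0$ of one cycle and the vertex $\phi+2\pi l/k$ of the other: its two complementary arcs are $\phi+2\pi l/k$ and $2\pi(l+1)/k-\phi$, and both lie strictly inside the interval $(2\pi l/k,\,2\pi(l+1)/k)$; hence the shorter of them strictly exceeds $2\pi l/k=\theta$, yielding a chord of length strictly greater than $1$ and the desired contradiction.
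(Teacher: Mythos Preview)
The paper does not actually prove this lemma; it only quotes it from Swanepoel~\cite{Swan}. So there is no in-paper argument to compare your proposal against, and your write-up stands on its own. It is essentially correct, with one step in part~(1) that needs justification.

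You assert that if $S$ does not lie in a closed arc of length at most $\theta$, then $\conv(S)$ contains the centre of the circle. As a statement about arbitrary finite subsets of a circle this is false: two points at arc-distance slightly larger than $\theta$ already give a counterexample. What makes it true here is the standing diameter hypothesis, and you should say so. If $S$ is contained in an open semicircle, the two extreme points of $S$ along that semicircle have shorter arc equal to the length of the minimal arc containing $S$, so that length is at most $\theta$ and we are done; if $S$ lies in a closed but not in an open semicircle, two of its points are antipodal at distance $2r>2/\sqrt 3>1$, which is impossible. Only after these two easy cases are disposed of may one conclude that the centre lies in the interior of $\conv(S)$ and run the Carath\'eodory/pigeonhole argument you describe.

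Part~(2) is correct. The key observation --- that along a simple cycle the $\pm\theta$ steps must all carry the same sign, since reversing direction would revisit the previous vertex --- is exactly what forces $\theta=2\pi m/k$ with $\gcd(m,k)=1$, and your derivation of $m=\lfloor k/2\rfloor$ from the diameter bound (hence $k$ odd) is clean. The uniqueness-of-cycle argument is also fine: for two disjoint rotated regular $(2l{+}1)$-gons with offset $\phi\in(0,2\pi/k)$, the pair you exhibit has both complementary arcs strictly between $2\pi l/k=\theta$ and $2\pi(l{+}1)/k=2\pi-\theta$, so the shorter one exceeds $\theta$ and yields a chord longer than~$1$.
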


Consider a Lenz configuration $V, |V|=n\ge 5$, that lies on two orthogonal circles $C_1$ and $C_2$, where $C_2$ has radius $\ge 1/\sqrt 2$. Put $V_1 = V\cap C_1, |V_1|=a$, $V_2 = V\cap C_2, |V_2|=n-a$. The number of diameters in $V_2$ is at most one, while the number of diameters in $V_1$ is at most $2\lfloor(a-1)/2\rfloor +1$. Thus the number of triangles in $G(V)$ is at most $$a+(n-a)\bigl(2\lfloor(a-1)/2\rfloor+1\bigr)=n+2(n-a)\lfloor(a-1)/2\rfloor,$$
and for each $n-2\ge a\ge 2,n\ge 5$ there is a Lenz configuration with that exact number of triangles.
It is not difficult to show that the maximum over $a$ of the number of triangles is exactly $F_3(n).$

Next we prove the following auxiliary statement concerning the number of triangles in a diameter graph:

\begin{lem}\label{lemtrbor} Any diameter graph $G=(V,E)$ in $\R^4$ on $n$ vertices has at most $4|E|/3 - 2n/3$ triangles. In particular, this quantity is at most $n^2/3+2n/3$.
\end{lem}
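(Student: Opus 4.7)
The plan is a standard double count. Write $3t = \sum_{v\in V} e(G[N(v)])$, where $t$ denotes the number of triangles of $G$ and $e(H)$ the number of edges of a graph $H$. For each vertex $v$, the neighborhood $N(v)$ lies on the Euclidean unit $3$-sphere $S^3_1$ centered at $v$, and because $X$ has diameter $1$, the set $N(v)$ itself has diameter at most $1$. Consequently $G[N(v)]$ coincides with the diameter graph of $N(v)$ viewed as a finite subset of the sphere $S^3_1$.

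Since $S^3_1$ has radius $1 > 1/\sqrt 2$, I would apply part~1 of Theorem~\ref{th1} to $G[N(v)]$ for every $v$ with $\deg(v)\ge 2$, obtaining
$$e(G[N(v)]) \le 2\deg(v) - 2.$$
For $\deg(v)\le 1$ the inequality is trivial (or one may first delete isolated vertices, which affects neither $t$ nor $|E|$). Summing over $v$ and using $\sum_v \deg(v)=2|E|$ yields
$$3t \;=\; \sum_{v\in V} e(G[N(v)]) \;\le\; \sum_{v\in V}(2\deg(v)-2) \;=\; 4|E|-2n,$$
which rearranges to the first bound $t \le 4|E|/3 - 2n/3$ of the lemma.

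For the absolute bound I would invoke Theorem~\ref{thbr}, which yields $|E|\le D_4(2,n)\le U_4(2,n)\le n^2/4+n$; substituting into the previous inequality gives $t \le 4|E|/3 - 2n/3 \le n^2/3 + 2n/3$, as claimed.

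I do not anticipate any real obstacle: the whole argument is a short double count. The only conceptual step worth checking carefully is the identification of $G[N(v)]$ with an honest diameter graph on a sphere of radius strictly greater than $1/\sqrt 2$, which is precisely what allows the strong V\'azsonyi-type bound of Theorem~\ref{th1} to do all the work.
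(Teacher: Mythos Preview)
Your proof is correct and follows exactly the same route as the paper: apply Theorem~\ref{th1} to each neighborhood $N(v)\subset S^3_1$ to get $e(G[N(v)])\le 2\deg(v)-2$, sum over $v$, divide by $3$, and then substitute the edge bound $|E|\le n^2/4+n$ from Theorem~\ref{thbr}. One small caveat (shared with the paper's own proof): the per-vertex inequality fails when $\deg(v)=0$, and your suggested fix of deleting isolated vertices changes $n$, so the first inequality of the lemma is literally false for graphs with isolated vertices --- but this is irrelevant everywhere the lemma is actually used.
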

\begin{proof} Suppose $V=\{v_1,\ldots, v_n\},$ and $v_i$ has degree $d_i$. All neighbors of $v_i$ lie on a three-dimensional unit sphere, thus, by Theorem \ref{th1}, there are at most $2d_i-2$ edges among the neighbors of $v_i$. So the vertex $v_i$ is contained in at most $2d_i-2$ triangles. This gives the first bound on the number of triangles $t(G)$ in $G:$ $t(G)\le \sum_{i=1}^n \frac{2d_i-2}3 = 4|E|/3 - 2n/3.$ As for the second bound, we know that $|E|\le n^2/4+n$ for all $n.$ One only has to ombine these two bounds.
\end{proof}

Now we go on to the proof of the second part of Theorem \ref{th2}. Consider a graph $G=(V,E)$ with at least $F_3(n)$ triangles. We will show that, if $n$ is sufficiently large, then $G$ has exactly $F_3(n)$ triangles, and $V$ forms a Lenz configuration.
By Lemma \ref{lemtrbor}, $|E|\ge 3n^2/16$. Choose $n$ large enough, so that

\begin{equation}\label{eqtrash} (\sqrt n - 8/3)(\sqrt n - 16/3)> \frac {2^4}{3^3}(\sqrt n - 1)(\sqrt n - 2).\end{equation}

This choice will be explained later.
We apply Lemma \ref{lemkst} to the graph $G$ with $s =n/32.$ Simple calculations show that, since $|E|\ge 3n^2/16$, the conditions of Lemma \ref{lemkst} are satisfied. Thus, $G$ contains a  subgraph $K_{s,3}$ on a set $V'$. Next, as in the proof of the previous part of the theorem, we choose a maximal set $V_1\supset V'$, such that $V_1$ contains a copy of $K_{s_1,t_1}, s_1\ge s,t_1\ge 3,$ as a spanning subgraph. We run the following inductive procedure. Denote by $V(i)$ the set of available vertices at the moment $i$. At the initial moment  the set of available vertices is equal to $V$. The procedure at the step $i$ is as follows:\\
1. We choose $V_i\subset V(i-1)$ to be a maximal set in $V(i-1)$ that contains a copy of $K_{s_1,t_1}, s_1\ge s,t_1\ge 3,$ as a spanning subgraph. We require that $|V_i|\ge |V(i-1)|/32$.\\
2. We set $V(i)=V(i-1)\backslash V_i$. \\
3. If $|V(i)|\le \sqrt n$, we stop, otherwise we go on to the step $i+1$.

Note that at each step we have $E(G[V(i)])\ge 3|V(i)|^2/16$. This can be checked similar to the end of the proof of the first part of Theorem \ref{th2}. We again rely on the fact that each vertex in $V(i-1)\backslash V_i$ has at most 4 neighbors in $V_i$.

We need to prove that it is always possible to execute step 1. For that we need to verify that we can apply Lemma \ref{lemkst} with $c=3/16$. The inequality from Lemma \ref{lemkst} we need to check looks almost exactly like inequality (\ref{eqtrash}), but with $|V(i)|$ instead of $\sqrt n$. If we are to apply the step 1, then by step 3 we have $|V(i)|>\sqrt n,$ and the inequality (\ref{eqtrash}) with $|V(i)|$ instead of $\sqrt n$ also holds.

It is easy to see that procedure terminates in  $k\le 20 \ln n$ steps, since $|V(i)|\le (1-1/32)^i n=e^{\ln n-i\ln (32/31)}.$
For convenience put $V_{k+1}= V(k)$.

Now we can estimate the number $t(G)$ of triangles in $G$. Denote by $e_i$ ($t_i$) the number of edges (triangles) in $V_i$. We obtain the following estimate:

\begin{equation}\label{eqtr} t(G)\le \sum_{i=1}^{k+1}t_i +  {k\choose 2}(8(2n-2)+6n)+ 4k e_{k+1}+ (4k)^2n = \sum_{i=1}^{k}t_i + O (n\ln^2n).\end{equation}

Let us explain the inequality. The first sum counts triangles that lie entirely in one of the parts of the vertex set partition.

The second summand bounds from above the number of triangles that have one vertex in some $V_i$ and and two vertices in some $V_j$, $k\ge j>i$. First we choose $i$ and $j$. Next, the vertices of $V_i, V_j$ lie on two pairs of circles. There are at most 8 vertices of $V_i$ that lie on the circles that contain $V_j$, since otherwise we could find three vertices from $V_i$ that lie on the same circle in $V_i$ and that fall onto the same circle of $V_j$. Consequently, these two circles would coincide, and $V_i$ and $V_j$ would have to lie on the same pair of circles. This contradicts the maximality of $V_i$. The number of triangles with these 8 vertices is at most $8(2n-2)$. All vertices that do not lie on the circles that contain $V_j$ have at most four neighbors in $V_j$, thus, each is contained in at most 6 triangles. We bound the number of such triangles by $6n$.

The third term counts the number of triangles that have exactly two vertices in $V_{k+1}$. We bound their number from above as follows. First we choose an edge in $V_{k+1}$, and then for one of its endpoints we choose a neighbor from some $V_i$ (there are at most $4k$ possibilities for this choice).

The fourth summand bounds from above the number of triangles that we did not count in the first three summands. For each triangle of this type there is a part $V_i$ of the partition that contains exactly one vertex $v$ of the triangle, and two other vertices lie in the parts $V_j,V_l,\ j,l< i$. There are $n$ choices for the vertex $v$. Next, there are less than $k^2$ choices to choose two parts of the partition in which two other vertices of the triangle lie. Finally, for each $j$, each vertex from $V_i, i>j$, is connected to at most four vertices from $V_j$.

The equality in (\ref{eqtr}) is due to the following. First, $k=O(\ln n)$. Second, $|V(k)|\le \sqrt n,$ thus $e_{k+1}, t_{k+1} = O(n)$ by Lemma \ref{lemtrbor}.

Suppose $|V_1|\le n - n^{0.2}.$ One can verify that for given $a,b>0$ we have $F_3(a+b)\ge F_3(a)+F_3(b)$ for $a,b\in \N$. Besides, if $a,b\in \N, a>2b$ and $a+b$ is sufficiently large, then $F_3(a+1)+F_3(b-1)\ge F_3(a)+F_3(b)$. Therefore, we have the following bound:
$$\sum_{i=1}^{k+1}t_i\le F_3(n-n^{0.2})+F_3(n^{0.2})\le n+n^2/4 - n^{0.2}(n-n^{0.2})/2 = F_3(n)-\Omega(n^{1.2}).$$

It follows that if $|V_1|\le n - n^{0.2},$ then for sufficiently large $n$ we have $t(G)<F_3(n).$  Consider the case when $|V_1|>n-n^{0.2}.$ Remind that $V(1) = V\backslash V_1$. We have $|V(1)|<n^{0.2}$, and for a given vertex $v$ in $V(1)$ the degree of $v$ in $G[V(1)]$ is at most $n^{0.2}$. The vertex $v$ is connected to at most four vertices from $V_1$. Thus $\deg v = O(n^{0.2})$, and, following the considerations in Lemma \ref{lemtrbor}, we can easily show that the number of triangles that contain $v$ is $O(n^{0.2})$. On the other hand, if we remove the vertex $v$ from the graph and add a vertex to a Lenz configuration formed by $V_1$, then, from the behavior of the function $F_3(n)$ we can see that the number of triangles formed by the points of $V_1$ will increase by $\Omega (n)$, and the total number of triangles in $G$ will surely increase, if $n$ is large enough.

Thus ``moving'' all vertices from $V(1)$ to $V_1$ will increase the total number of triangles. At the end we obtain that the vertices of $G$ form a Lenz configuration, which concludes the proof of this part of the theorem.

\subsection{Schur's conjecture in $\R^4$}\label{sec33}

Consider a diameter graph $G =(V,E)$.

By Theorem \ref{th3}  any two 4-cliques in $G$ either have at least two common vertices, or do not have any. We show that $V$ can be decomposed into disjoint sets of vertices $V_1,\ldots, V_k$ with the following properties. First, any 4-clique lies entirely in one of the sets $V_1,\ldots, V_k$. Second, inside any of $V_i$ any pair of 4-cliques intersect in at least two vertices. In other words, we want to split the set of all 4-cliques into equivalence classes, in which we consider cliques equivalent if they intersect. Next, we put $V_i$ to be the union of all vertices of the cliques from the $i$-th equivalence class.

To prove that such a partition exists we need to show that this is indeed an equivalence relation. All we need to check is transitivity, i.e. that there is no such triple of 4-cliques  $K^1,K^2,K^3$, such that $|K^1\cap K^2|=|K^1\cap K^3| =2, |K^2\cap K^3| = 0.$ Note that if the cardinality of the intersection of $K^1$ with one of the rest is greater than 2, then, by the pigeonhole principle, the other two also have to intersect.

Denote by  $v_1,v_2,v_3,v_4$ the vertices of $K^1$, where $v_1,v_2\in K^2, v_3,v_4\in K^3$. The other vertices are $w_1,w_2 \in K^2,w_3,w_4\in K^3$. The hyperplane that passes through $v_1,v_2,v_3,v_4$ we denote by $\pi$. The points $v_1,v_2,w_3,w_4$ lie on a two-dimensional sphere $S_1$ of radius $\sqrt 3/2.$ Its center is the middle of the segment that connects $v_3,v_4$, while the sphere itself lies in the hyperplane $\gamma$ that is orthogonal to the segment. Analogously, the points $v_3,v_4,w_1,w_2$ lie on the two-dimensional sphere $S_2$ of radius $\sqrt 3/2,$ whose center is the midpoint of the segment $v_1v_2$.

According to Lemma \ref{lem3}, the arcs $\overset{\frown}{v_3v_4}$ and $\overset{\frown}{w_1w_2}$ (as well as $\overset{\frown}{v_1v_2}$ and $\overset{\frown}{w_3w_4}$) intersect, which implies that $w_1$ and $w_2$ (as well as $w_3$ and $w_4$) lie in the different closed halfspaces bounded by $\pi$. Indeed, $\pi\cap S_1$ is the great circle that passes through $v_1,v_2$, and $w_3,w_4$ have to be on the opposite sides of this great circle. Moreover, it is easy to derive from the proof of Lemma \ref{lem3} that none of the $w_i$ lie in the plane $\pi$. Otherwise it would be either an interior point of the arc $v_1v_2$ (or $v_3v_4$), or it would coincide with one of the $v_j$. In the first case, based on Lemma \ref{lem3}, we would obtain a contradiction with the fact that $\|v_i-w_j\|\le 1$, while $\|w_1-w_2\| = \|w_3-w_4\| = 1.$ In the second case the intersection of some two of the cliques $K_i$ would be greater than 2.

Denote by $\pi^+,\pi^-$ two open halfspaces bounded by $\pi$. W.l.o.g., $w_1,w_3\in \pi^+,$  $w_2,w_4\in\pi^-$. Consider three-dimensional spheres $S^w_1,S^w_2$ of unit radius with centers in $w_1,w_2$. They intersect with $S_1$ in the points $v_1,v_2$, and none of the two spheres $S^w_i$ contain $S_1$. Otherwise the distance from $w_1$ (or $w_2$) to any point of $S_1$ would be the same, and, by the law of cosines, the vector that connects the center $o$ of $S_1$ with $w_1$ ($w_2$) would be orthogonal to $\gamma$, which is not true. Indeed, since $w_1,w_2$ do not lie in $\pi$ but $o$ lies in $\pi,$ both $\overline{ow_1}$ and $\overline{ow_2}$ have a non-zero component that is orthogonal to $\pi$. On the other hand, since $v_3,v_4\in \pi$, the vector $\bar{u}$ that is orthogonal to $\pi$ is also orthogonal to $v_3v_4$ and, consequently, lies in the hyperplane $\gamma$. As we already established, the scalar product of $\overline{ow_1}$ ($\overline{ow_2}$) and $\bar{u}$ is nonzero, which means that $\overline{ow_1}$ and $\overline{ow_2}$ are not orthogonal to $\gamma$. Therefore, the intersections of $S_1$ and $S^w_i$ are \textit{circles} $S'_i$ on $S_1$ which pass through the points $v_1,v_2$ (see Fig. 1).


Our goal is to show that there is no room for the points $w_3,w_4$ such that all the conditions based on the fact that $G$ is a diameter graph are satisfied. The points $w_3,w_4$ lie on the sphere $S_1$. At the same time $w_3,w_4\in B^w_1\cap B^w_2\cap B^v_1\cap B_2^v,$ where $B_i^w$ are unit balls with centers at $w_i$, while $B^v_i$ are unit balls of unit radius with centers at $v_i$. By $S^v_i$ we denote the boundary sphere of $B_v^i$. We prove that the intersection $S_1\cap B^w_1\cap B^w_2\cap B^v_1\cap B_2^v$ cannot contain a pair of points at unit distance apart except for $v_1,v_2$.

Henceforth, all the considerations are limited to the hyperplane $\gamma$, and, not willing to introduce excessive notations, we modify all the notations of balls, spheres, hyperplanes and halfspaces so that the notations now correspond to these objects intersected with $\gamma$ (instead of the objects in $\R^4$). In particular, we will denote by $S^w_1,S^w_2$ two-dimensional spheres (with centers in $w_1',w_2'$), which are the intersections of the three-dimensional spheres $S^w_1,S^w_2$ with $\gamma$; by $\pi$ we denote the plane $\pi\cap \gamma$. Note that $w'_1$ lies in $\pi^{+}$ and $w'_2$ lies in  $\pi^-$.

Let $\pi_1$ be the two-dimensional plane which is orthogonal to the segment $v_1v_2$ and passes through the midpoint of the segment. The center of $S_1$ and the points $w'_1,w'_2$ all lie in $\pi_1$. We denote by  $\pi_1^+$ the open halfspace containing $v_1$, and by $\pi_1^-$ the open halfspace containing  $v_2$. Let $\pi_2$ be the two-dimensional plane which is orthogonal to both $\pi$ and $\pi_1$ and passes through the center of $S_1$. It is not difficult to see that the set $S_1\cap B^v_1\cap B_2^v$ lies entirely in the open halfspace $\pi_2^+$ that is bounded by $\pi_2$ and contains $v_1,v_2$. For $u'\in S_1$ denote by $H_{u'}$ an open hemisphere with center in $u'$. We intend to show that $S_1\cap B^v_1\cap B_2^v\subset H_u=S_1\cap \pi_2^+$, where $u$ is the midpoint of the arc $\overset{\frown}{v_1v_2}$. Since the radius of $S_1$ is greater than $1/\sqrt 2,$ we have $S_1\cap B_i^v\subset H_{v_i}$ for $i=1,2$. On the other hand, since $u\in \overset{\frown}{v_1v_2},$ we surely have $H_{v_1}\cap H_{v_2}\subset H_u$. Therefore, we have the following chain of inclusions: $S_1\cap B^v_1\cap B_2^v\subset H_{v_1}\cap H_{v_2}\subset H_u=S_1\cap \pi_2^+.$

\begin{center}  \includegraphics[width=120mm]{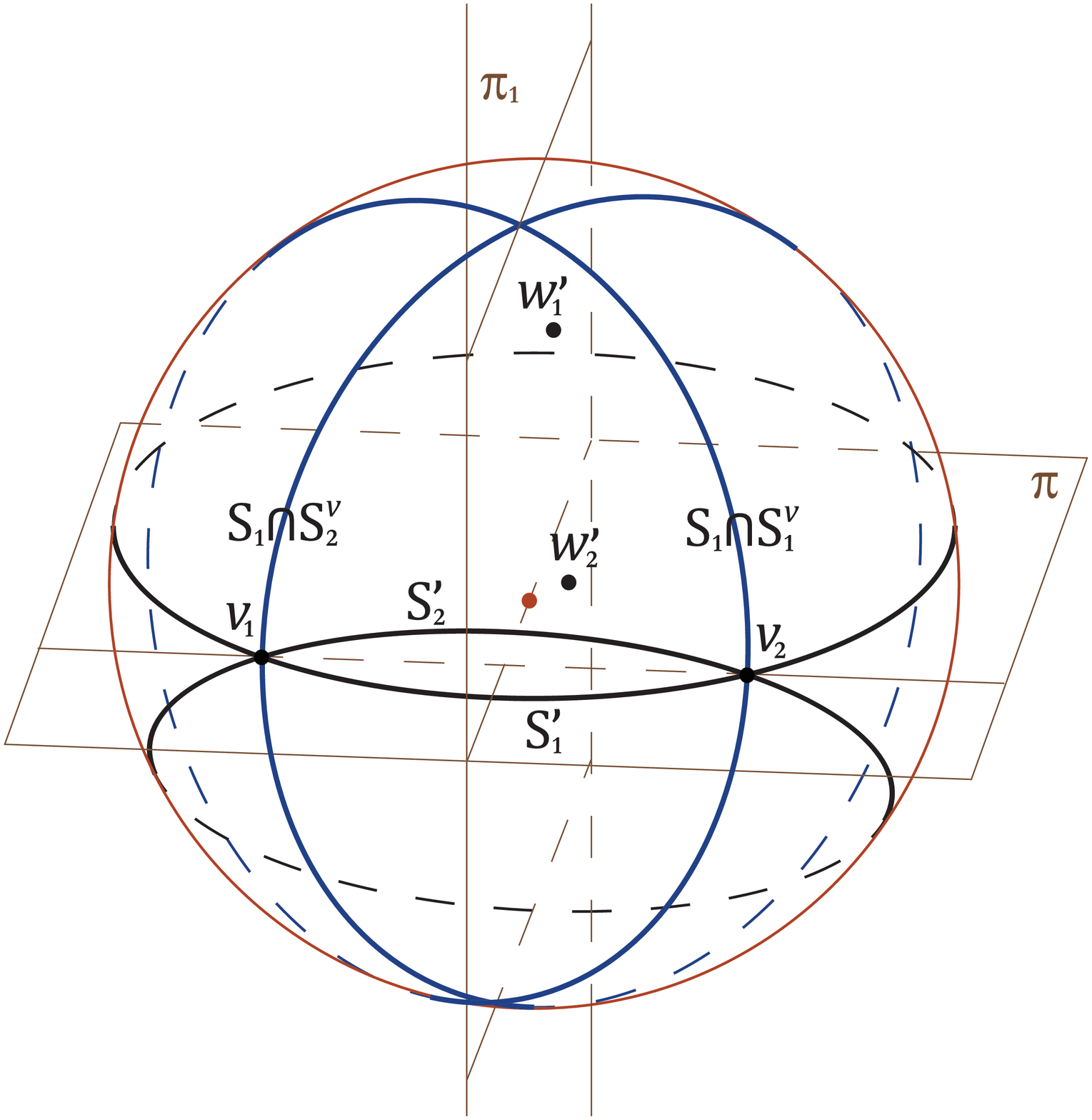}\label{pic3}  \end{center}\begin{center}  Fig. 1 \end{center}

    Next we prove that in the halfspace $\pi_1^-$  the circles $S'_1$ and $S_1\cap S^v_1$ intersect only in $v_2$. Surely, there are at most two intersection points in total. Due to the fact that $S'_1$ lies on $S_1$, the intersection of these two circles coincides with the intersection of the sphere $S_1^v$ and the circle $S'_1$. Further, since the center of $S^v_1$ lies on the circle $S'_1$, the intersection points of these two spheres should be symmetric  in the plane that contains $S'_1$ with respect to the line that contains their centers. But since one center lies in $\pi_1^+$ and the other lies in $\pi_1$,  one of the intersection points must lie in $\pi_1^+$, and $v_2$ is indeed their only intersection point in $\pi_1^-$. An analogous fact holds for the circles $S'_2$ and $S_1\cap S^v_1$, and also in the symmetric halfspace $\pi_1^+$ for the circles $S_1\cap S^v_2$ and $S'_i$ and a point $v_1$.

Recall that $w'_1\in \pi^+, w'_2 \in \pi^-$. The set $S_1\cap B^w_1$ is situated above the plane containing the circle $S'_1$ (in the direction of the normal vector to the plane $\pi$ that points to $\pi^+$). Analogously, the set $S_1\cap B^w_2$  is situated below the plane containing the circle $S'_2$. We show this for $S_1\cap B^w_1$. For this consider a reflection $\mathcal{R}:S_1\to S_1$ with respect to the plane $\pi$. Then for any point $u\in S_1\cap \pi^+$ we have $\|w'_1-u\|<\|w'_1-\mathcal R(u)\|,$ because $w'_1\in \pi^+$. The circle $S_1'$ bounds the set $S_1\cap B^w_1$, and the point on $S_1$ that is above the center of $S_1\cap B^w_1$ is closer to $w'_1$ than the point that is below the center. Note that the planes of the circles $S'_i$ cannot be orthogonal to $\pi$, since otherwise the point $w'_i$ would lie in the plane $\pi$.

The circles $S_1',S_2'$ split the sphere $S_1$ into four parts, and one of them is the set $S_1\cap B^w_1\cap B^w_2$.
From the above considerations we get that, depending on the positions of the points $w'_1,w'_2,$ the set $S_1\cap B^w_1\cap B^w_2$ has two possible locations out of four. The reason is that it is impossible that the set $S_1\cap B^w_1\cap B^w_2$
is bounded by a shorter arc $v_1v_2$ of $S_1'$ and a greater arc $v_1v_2$ of $S_2'$ (or vice versa), because in this case $v_1v_2$ of $S_1'$ is either below or above both circles $S_1', S_2'$. To prove this, we first note that from a parity argument follows that if move along the sphere $S_1$ and cross one of the circles (not in $v_1,v_2$), then, if we were in an admissible region, we arrive to a not admissible region, and vice versa. Thus, it suffices to show that the region between two shorter arcs is admissible. We already know that $v_1,v_2\in \pi_2^+$. Consider the plane $\pi'$, which is parallel to $\pi_2$  and passes through $v_1,v_2$. Any circle on $S_1$ that contains $v_1,v_2$ must have its shorter arc $v_1v_2$ in the halfspace with respect to $\pi'$ in which the point $u$ lies, which shows that the region between the two shorter arcs $v_1v_2$ of $S_1', S_2'$ is above one of the two circles and below the other.
We are left with the following two cases.

\textbf{Case 1:\ } The set $S_1\cap B^w_1\cap B^w_2$ on $S_1$ is bounded by the greater arcs of the circles $S'_i$ with the endpoints $v_1,v_2$. We specify the set $S_1\cap B^v_1\cap B^v_2$ in the following way:
$$S_1\cap B^v_1\cap B^v_2 = (S_1\cap B^v_1\cap \pi_1^-) \bigcup (S_1\cap B^v_2\cap \pi_1^+).$$

 Several paragraphs before we proved that the sets $S_1\cap B^w_1\cap B^w_2$ and $S_1\cap B^v_2\cap \pi_1^+$ intersect only in the vertex $v_1,$ while the sets $S_1\cap B^w_1\cap B^w_2$ and $S_1\cap B^v_1\cap \pi_1^-$ intersect only in the vertex $v_2.$ Thus, we obtain that

$$\bigl(S_1\cap B^v_1\cap B^v_2\bigr)\bigcap \bigl(S_1\cap B^w_1\cap B^w_2\bigr) = \{v_1,v_2\},$$
and there is no room for $w_3,w_4$ at all.

\textbf{Case 2:\ } The set $S_1\cap B^w_1\cap B^w_2$ on the sphere is bounded by the shorter arcs of the circles $S'_i$ with the endpoints in $v_1,v_2$. In that case the set $S_1\cap B^w_1\cap B^w_2$ lies entirely in the spherical cap $H$, which is cut off by the plane $\pi'$, which is parallel to $\pi_2$  and passes through $v_1,v_2$. Moreover, only the points $v_1,v_2$ lie in the intersection of $S_1\cap B^w_1\cap B^w_2$ and $\pi'$. Indeed, the set $S_1\cap B_1^w$  does not intersect with $\pi'\cap S_1\cap \pi^-$ due to the description of the position of the set $S_1\cap B^w_1$ given  before the case 1 (recall that the halfspace $\pi^+,\pi^-$ are open). Analogously, $S_1\cap B_2^w$  does not intersect with $\pi'\cap S_1\cap \pi^+$.

On the other hand, the shorter arcs of the circles $S'_i$ must lie inside $H$.

The circle $S_1\cap \pi'$ has diameter 1, and the points that lie on the sphere $S_1$ in the interior of $H$, cannot be at unit distance apart. Thus, the distance between a pair of points in $S_1\cap B^w_1\cap B^w_2$ cannot be equal to one, if these points do not coincide with $v_1,v_2$. It means that inside $S_1\cap B^w_1\cap B^w_2$ there is no room for the points $w_3,w_4$.\\

We proved that the above described partition of the vertex set $V$ into sets $V_1,\ldots V_k$ indeed exists. We apply Theorem \ref{thPM} to each $V_i$ and obtain that the number of 4-cliques on each set $V_i$ does not exceed $|V_i|$, thus the total number of cliques does not exceed $\sum_i |V_i| = n$. The proof of Schur's conjecture in $\mathbb R^4$ is complete.

\section{Acknowledgements}
The author is grateful to Alexandr Polyanskiy, who read the first version of the manuscript and made several valuable comments, and to the anonymous referee for numerous comments that helped to improve both the style and the presentation of the paper.


\begin{thebibliography}{111}
\bibitem{Bol} B. Bollob\'as, \textit{Extremal Graph Theory}, Academic Press, London, 1978. Reprinted by
Dover, Mineola, NY, 2004.

\bibitem{Bond} A.V. Bondarenko, \textit{On Borsuk’s conjecture for two-distance sets}, 2013, arXiv:1305.2584

\bibitem{Bor} K. Borsuk, \textit{Drei S\"atze \"uber die n-dimensionale euklidische Sph\"are}, Fund. Math. 20 (1933),
177-190.

\bibitem{Br} P. Brass, \textit{On the maximum number of unit distances among n points in dimension
four}, in: Intuitive Geometry, I. Bґarґany et al., eds., Bolyai Soc. Mathematical Studies
6 (1997) 277–290.

\bibitem{BMP} P. Brass, W. Moser, J. Pach,
{\it Research problems in discrete geometry}, Springer,
Berlin, 2005.


\bibitem{Dol} V. L. Dol'nikov, \textit{Some properties of graphs of diameters}, Discrete Comput. Geom. 24 (2000),
293-299.

\bibitem{Erd} P. Erd\H os, \textit{On sets of distances of n points}, Amer. Math. Monthly, 53 (1946), 248–250.

\bibitem{Erd2} P. Erd\H os, \textit{On sets of distances of n points in Euclidean space}, Magyar Tud. Akad.
Mat. Kut. Int. K\"ozl. 5 (1960), 165–169.

\bibitem{GR} B. Gr\"unbaum, \textit{A proof of V\'azsonyi's conjecture}, Bull. Res. Council Israel, Sect. A 6 (1956)
77-78.

\bibitem{Hep2} A. Heppes, \textit{Beweis einer Vermutung von A. V\'azsonyi}, Acta Math. Acad. Sci. Hungar.
7 (1957), 463–466.

\bibitem{Hep} A. Heppes, P. R\'ev\'esz, \textit{Zum Borsukschen Zerteilungsproblem}, Acta Math. Acad. Sci.
Hungar. 7 (1956), 159-162.

\bibitem{PH} H. Hopf, E. Pannwitz, \textit{Aufgabe Nr. 167}, Jahresbericht Deutsch. Math.-Verein. 43 (1934),
 p.
114.
\bibitem{KK} J. Kahn, G. Kalai, \textit{A counterexample to Borsuk's conjecture}, Bulletin of the American Mathematical
Society 29 (1993), 60-62.

\bibitem{KST} T. K\H ov\'ari, V. S\'os, P. Tur\'an, \textit{On a problem of K. Zarankiewicz}, Colloq. Math. 3 (1954) 50–57.

\bibitem{Philip2} F. Mori\'c, J. Pach, Remarks on Schur's conjecture, preprint.

\bibitem{Rai1} A.M. Raigorodskii, {\it Borsuk's problem and the chromatic numbers of some metric spaces}, Russian Math. Surveys, 56 (2001), N1, 103 - 139.

 \bibitem{Rai3} A.M. Raigorodskii, {\it Three lectures on the Borsuk partition problem}, London Mathematical Society Lecture note Series, 347 (2007), 202-248.

\bibitem{Sch} Z. Schur, M. A. Perles, H. Martini, Y. S. Kupitz, \textit{On the number of maximal regular simplices
determined by n points in $\R^d$},  Discrete and Computational Geometry, The Goodman-
Pollack Festschrift, Aronov etc. eds., Springer, 2003.

\bibitem{St} S. Straszewicz, \textit{ Sur un probl\`eme g\'eom\'etrique de P. Erd\H os}, Bull. Acad. Polon. Sci. Cl.
III. 5 (1957), 39–40.

\bibitem{Swan} K. J. Swanepoel, \textit{Unit distances and diameters in Euclidean spaces}, Discrete and Computational
Geometry, Vol. 41(1), 2009, 1-27.

\bibitem{Swan1} K. J. Swanepoel, \textit{A new proof of V\'azsonyi's conjecture}, J. Comb. Theory, Ser. A, Vol. 115(5), 2008, 888-892.
\bibitem{Wam}P. van Wamelen, \textit{The maximum number of unit distances among n points in dimension
four}, Beitr\"age Algebra Geometrie 40 (1999), 475–477.
\end{thebibliography}
\end{document}